\newtheorem{thm}{Theorem}[section]
\newtheorem{lem}[thm]{Lemma}
\newtheorem{prop}[thm]{Proposition}
\theoremstyle{definition}
\theoremstyle{remark}
\numberwithin{equation}{section}
\newcommand{\R}{\mathbb{R}}
\begin{document}

\title[]
{}

\subjclass[2000]{Primary: 35J60, 35J10}

\title[Nonrelativistic limit of Pseudo Relativistic NSE]
{Nonrelativistic limit of standing waves for pseudo-relativistic nonlinear Schr\"odinger equations}

\author{Woocheol Choi}
\address[Woocheol Choi]{
School of Mathematics, Korea Institute for Advanced Study,
Hoegiro 85, Dongdaemun-gu, Seoul 130-722, Republic of Korea}
\email{wchoi@kias.re.kr}

\author{Jinmyoung Seok}
\address[Jinmyoung Seok]{Department of Mathematics, Kyonggi University,
154-42 Gwanggyosan-ro, Yeongtong-gu, Suwon 443-760, Republic of Korea}
\email{jmseok@kgu.ac.kr}

\begin{abstract}
In this paper we study standing waves for pseudo-relativistic nonlinear Schr\"odinger equations.
In the first part we find ground state solutions.
We also prove that they have one sign and are radially symmetric.
The second part is devoted to take nonrelativistic limit of the ground state solutions in $H^1 (\mathbb{R}^n)$ space.

\end{abstract}

\maketitle

\section{Introduction}
In this paper we are concerned with standing waves of a relativistic wave equation
\begin{equation}\label{eq-i1}
i \frac{\partial \psi}{\partial t} = \sqrt{(-c^2 \Delta + m^2 c^4)}\psi - mc^2 \psi +F'(|\psi|^2)\psi
\quad \text{in } \mathbb{R}^n \times \R, \quad n \geq 2
\end{equation}
where
$c > 0$ denotes the speed of light, $m > 0$ represents the particle mass and
$F : [0, \infty) \rightarrow \mathbb{R}$ is a internal potential function.
This equation is often referred to as the pseudo-relativistic nonlinear Schr\"odinger equation
because it can be considered as one of relativistic versions of usual nonlinear Schr\"odinger equation
\[
i \frac{\partial\psi}{\partial t} = -\frac{1}{2m}\Delta\psi+ F'(|\psi|^2)\psi.
\]
The equation \eqref{eq-i1} describes dynamics of systems consisting of identical spin-$0$ bosons
whose motions are relativistic (see \cite{LT, LY}), for example, boson stars.
We refer to \cite{DSS, ES, FJL1, FJL2, FJL3, L1, LT, LY} for the rigorous derivation and dynamical study.
In this work we pay attention to the standard power type nonlinearity
$F(s) = -\frac1ps^{p/2},\, p > 2$.
A more intricate case where the power type nonlinearity is combined with the Hartree type nonlinearity will be studied in a forthcoming work \cite{CS2}.
\

Inserting the standing wave ansatz $\psi(x,t) = e^{i\mu t} u(x)$ to \eqref{eq-i1},
we obtain the following nonlinear scalar field equation
\begin{equation}\label{main-eq}
 \sqrt{-c^2\Delta + m^2c^4}u -mc^2 u +\mu u = |u|^{p-2}u \quad \text{ in } \mathbb{R}^n,\quad n \geq 2,
\end{equation}
which is a relatistic verstion of the limit equation
\begin{equation}\label{limit-eq}
-\frac{1}{2m}\Delta u + \mu u = |u|^{p-2}u.
\end{equation}

This problem has been studied recently by Coti Zelati-Nolasco \cite{CN2} and Tan-Wang-Yang \cite{TWY}.
By just letting $c = 1$ (choosing units such that $c = 1$), it is shown in \cite{CN2} that
there exists a radially symmetric positive solution of \eqref{main-eq}
for all $\mu > 0$ and $p \in (2, 2n/(n-1))$.
Also, when setting $c = 1,\, \mu = m = 1$, the existence of a ground state solution for
$p \in (2, 2n/(n-1))$ and the nonexistence of bounded solution for $p \geq 2n/(n-1)$ are proved in \cite{TWY}.

In this paper, we investigate the ground state solutions of \eqref{main-eq} further.
We shall study their existence for full range of $\mu > 0$,
their qualitative properties such as symmetry or sign definiteness,
and their nonrelativistic limit.
We note the associated functional of \eqref{main-eq} is defined on $H^{1/2}(\mathbb{R}^n)$ as follows:
\begin{equation}\label{eq-E}
\begin{split}
I(u)
&= \frac{1}{2} \int_{\mathbb{R}^n}(\sqrt{c^2 \xi^2 + m^2 c^4}-mc^2+\mu)|\hat{u} (\xi)|^2\,d\xi -\frac{1}{p} \int_{\mathbb{R}^n} |u|^p\,dx,
\end{split}
\end{equation}
where $\hat{u}(\xi)$ denotes the Fourier transform $(2\pi)^{-n/2} \int_{\mathbb{R}^n} e^{-i x \cdot \xi} u(x) dx$.
Every critical point $u$ of $I$ solves \eqref{main-eq} and vice versa.
We say a critical point $u \in H^{1/2}(\mathbb{R}^n)$ of $I$ is a ground state solution of \eqref{main-eq} if
it satisfies
\[
I(u) = \min\{ I(v)  ~|~ v \in H^{1/2}(\mathbb{R}^n) \setminus \{0\},\, I'(v) = 0\}.
\]

\begin{thm}[Existence]\label{existence}
Let $m,\, \mu > 0,\, c \geq 1$ and $p \in (2, 2n/(n-1))$ be fixed.
Then, there exists a ground state solution of \eqref{main-eq}.
\end{thm}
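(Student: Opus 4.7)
The plan is to obtain a ground state as a mountain pass critical point, using translations and Lions' concentration--compactness lemma to recover compactness. First, I would record that the symbol $\sqrt{c^2|\xi|^2+m^2c^4}-mc^2+\mu$ is bounded below by a positive multiple of $1+|\xi|$ (it equals $\mu$ at $\xi=0$, behaves like $|\xi|^2/(2m)$ for small $|\xi|$ and like $c|\xi|$ for large $|\xi|$; using $\mu>0$ and $c\geq 1$ these two regimes fit together). Consequently, the quadratic form appearing in $I$ defines a norm on $H^{1/2}(\R^n)$ equivalent to the standard one, and the fractional Sobolev embedding $H^{1/2}(\R^n)\hookrightarrow L^q(\R^n)$ for $q\in[2,2n/(n-1)]$ shows $I\in C^1(H^{1/2}(\R^n))$ because $p<2n/(n-1)$.

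Next I would verify the mountain pass geometry. Clearly $I(0)=0$; Sobolev gives $I(u)\geq C_1\|u\|_{H^{1/2}}^2-C_2\|u\|_{H^{1/2}}^p$ near the origin, so (since $p>2$) there exist $\rho,\alpha>0$ with $I(u)\geq\alpha$ whenever $\|u\|_{H^{1/2}}=\rho$; and for any fixed $u\not\equiv 0$ one has $I(tu)\to-\infty$ as $t\to+\infty$. The mountain pass theorem (in its Ekeland form) then produces a Palais--Smale sequence $(u_k)\subset H^{1/2}(\R^n)$ at the level
\[
c_\ast:=\inf_{\gamma\in\Gamma}\max_{t\in[0,1]}I(\gamma(t))>0.
\]
Boundedness of $(u_k)$ follows from
\[
I(u_k)-\tfrac{1}{p}\inp{I'(u_k)}{u_k}=\left(\tfrac12-\tfrac1p\right)\int_{\R^n}\bigl(\sqrt{c^2|\xi|^2+m^2c^4}-mc^2+\mu\bigr)|\hat u_k(\xi)|^2\,d\xi,
\]
together with the norm equivalence and $p>2$.

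The main obstacle is the translation invariance of \eqref{main-eq}, which prevents the weak limit of $(u_k)$ from being nontrivial a priori. To overcome this I would invoke Lions' vanishing lemma: if $\sup_{y\in\R^n}\int_{B_1(y)}|u_k|^2\,dx\to 0$, then $u_k\to 0$ in $L^q(\R^n)$ for every $q\in(2,2n/(n-1))$, and combining this with $\inp{I'(u_k)}{u_k}\to 0$ and the norm equivalence forces $\|u_k\|_{H^{1/2}}\to 0$, contradicting $I(u_k)\to c_\ast>0$. Thus there exist $y_k\in\R^n$ and $\delta>0$ with $\int_{B_1(y_k)}|u_k|^2\,dx\geq\delta$; the translated sequence $\tilde u_k:=u_k(\cdot+y_k)$ is again a bounded PS sequence and, after passing to a subsequence, converges weakly to some $u\not\equiv 0$. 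Standard local compactness arguments (using $H^{1/2}\hookrightarrow L^p_{\mathrm{loc}}$) let me pass to the limit in $I'(\tilde u_k)\to 0$ to conclude $I'(u)=0$, so $u$ is a nontrivial weak solution of \eqref{main-eq}.

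Finally, to upgrade $u$ to a ground state I would identify $c_\ast$ with the Nehari level $c_{\mathcal N}:=\inf\{I(v):v\in H^{1/2}(\R^n)\setminus\{0\},\ I'(v)=0\}$. Because $p>2$, for every $v\neq 0$ the map $t\mapsto I(tv)$ on $(0,\infty)$ has a unique maximizer $t_v$ at which $\inp{I'(t_vv)}{t_vv}=0$; standard Nehari manifold reasoning then gives $c_\ast\leq c_{\mathcal N}$, while the reverse inequality is automatic since any nontrivial solution lies on a mountain pass path through $0$. Hence $I(u)=c_\ast=c_{\mathcal N}$, so $u$ is a ground state solution. The principal difficulty throughout is the loss of compactness, which is precisely what Lions' lemma and the translation trick are designed to address.
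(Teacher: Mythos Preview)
Your approach is sound but genuinely different from the paper's. The paper passes to the harmonic extension on $\mathbb{R}^{n+1}_+$ and minimizes the extended functional $I_e$ directly over its Nehari manifold. Compactness is recovered not via Lions' lemma and translations but by Schwarz symmetrization: an arbitrary minimizing sequence is replaced by one whose boundary traces are radially decreasing (the kinetic part does not increase under rearrangement while all $L^q$ norms are preserved), and then the compact embedding $H^{1/2}_r(\mathbb{R}^n)\hookrightarrow L^p(\mathbb{R}^n)$ for radial functions delivers a nontrivial strong $L^p$ limit. A Lagrange multiplier argument finishes by showing the Nehari minimizer is an unconstrained critical point. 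The payoff is an immediately radially symmetric ground state, which the paper exploits in later sections; your route avoids the extension machinery entirely and is closer to the textbook treatment of autonomous semilinear problems.

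One point in your final paragraph needs repair. Your justification for the ``reverse inequality'' $c_{\mathcal N}\leq c_\ast$ (that any nontrivial solution lies on a mountain pass path) actually yields $c_\ast\leq I(v)$ for every critical point $v$, i.e.\ it reproves $c_\ast\leq c_{\mathcal N}$ rather than the opposite direction. What is really missing is that your specific weak limit $u$ satisfies $I(u)\leq c_\ast$. This follows from the PS condition: $I(\tilde u_k)-\tfrac12\langle I'(\tilde u_k),\tilde u_k\rangle=(\tfrac12-\tfrac1p)\int_{\mathbb{R}^n}|\tilde u_k|^p\to c_\ast$, and by Fatou $\int_{\mathbb{R}^n}|u|^p\leq\liminf_k\int_{\mathbb{R}^n}|\tilde u_k|^p$, so that $I(u)=(\tfrac12-\tfrac1p)\int_{\mathbb{R}^n}|u|^p\leq c_\ast$. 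Combined with $c_\ast\leq c_{\mathcal N}\leq I(u)$ (the last inequality because $u$ is a nontrivial critical point) this closes the argument.
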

In Theorem \ref{existence} we shall find the solution through the energy minimization problem on the Nehari manifold. This approach enables us to show that the solution has the minimal energy among all possible solutions of \eqref{main-eq}. In addition it will be convenient to prove that the uniform $L^p$ boundedness of ground state solutions $u_c$ with respect to $c \geq 1$, which is important in proving the nonrelativistic limit $c \rightarrow \infty$ of $u_c$ in Theorem \ref{nrlimit} below.

For the limit equation \eqref{limit-eq}, it was shown that the ground state solution has one sign
and radially symmetric up to a translation.
It is however not known whether the positive radial solution of \eqref{main-eq}
constructed in \cite{CN2} is a ground state solution or not. It is also not clear whether
the ground state solution of \eqref{main-eq} obtained in \cite{TWY} is radially symmetric
up to a translation or not.
We clarify this issue in the following theorem.
\begin{thm}[Qualitative properties of ground states]\label{qp}
Let $m,\, \mu > 0,\, c \geq 1$ and $p \in (2, 2n/(n-1))$ be fixed.
Suppose $u \in H^{1/2}(\mathbb{R}^n)$ is a ground state solution of \eqref{main-eq}.
Then $u$ has one sign and is radially symmetric up to a translation.
\end{thm}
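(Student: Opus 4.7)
The plan is to combine the variational characterization of the ground state level with the symmetrization properties of the pseudo-relativistic kinetic quadratic form, in the spirit of the classical rearrangement argument used for \eqref{limit-eq}. Write
\[
A(v) := \int_{\mathbb{R}^n} \bigl(\sqrt{c^2|\xi|^2+m^2c^4} - mc^2 + \mu\bigr)|\hat v(\xi)|^2\,d\xi,\qquad B(v) := \int_{\mathbb{R}^n}|v|^p\,dx,
\]
so that $I(v) = \tfrac12 A(v) - \tfrac1p B(v)$. For each $v\neq 0$, $t\mapsto I(tv)$ attains its unique positive maximum at $t_v = (A(v)/B(v))^{1/(p-2)}$, and $t_v v$ lies on the Nehari manifold $\mathcal N$. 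Hence the ground state level is
\[
c_* = \min_{\mathcal N} I = \Bigl(\tfrac12-\tfrac1p\Bigr)\inf_{v\neq 0}\frac{A(v)^{p/(p-2)}}{B(v)^{2/(p-2)}}.
\]

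The two transformations I will use are $v\mapsto|v|$ and $v\mapsto v^*$, the symmetric decreasing rearrangement. Their relevant properties are: (a) $A(|v|)\le A(v)$, with equality iff $v$ is a.e.\ of one sign; (b) $A(v^*)\le A(|v|)$, with equality iff $|v|$ coincides a.e.\ with a translate of $v^*$; (c) $B(|v|)=B(v)=B(v^*)$. The $\mu$-- and $-mc^2$--contributions to $A$ depend only on $\|\cdot\|_2$, which is invariant under both operations. For the nontrivial symbol $\sqrt{c^2|\xi|^2+m^2c^4}$, the Bessel--kernel representation
\[
\int_{\mathbb{R}^n} v\bigl(\sqrt{-c^2\Delta+m^2c^4}-mc^2\bigr)v\,dx = \frac{1}{2}\iint_{\mathbb{R}^n\times\mathbb{R}^n}|v(x)-v(y)|^2 J_{m,c}(|x-y|)\,dx\,dy,
\]
with a strictly positive, radially decreasing kernel $J_{m,c}$, reduces (a) to the pointwise fact $|v(x)-v(y)|^2\ge||v(x)|-|v(y)||^2$, and reduces (b) to the Riesz rearrangement inequality applied with kernel $J_{m,c}$, together with a now--standard non-local Polya--Szego rigidity result (as in Frank--Seiringer and subsequent work on fractional quadratic forms).

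Granting (a)--(c), the conclusion is immediate. If $u$ is a ground state, then from (a) and (c),
\[
c_* \;\le\; I(t_{|u|}|u|) \;=\; \Bigl(\tfrac12-\tfrac1p\Bigr)\frac{A(|u|)^{p/(p-2)}}{B(|u|)^{2/(p-2)}} \;\le\; \Bigl(\tfrac12-\tfrac1p\Bigr)\frac{A(u)^{p/(p-2)}}{B(u)^{2/(p-2)}} \;=\; I(u) \;=\; c_*,
\]
so equality holds in (a) and $u$ is of one sign. After replacing $u$ by $-u$ if necessary, assume $u\ge 0$ and repeat the argument with $u^*$ in place of $|u|$, invoking (b)--(c); the chain of inequalities is then an equality, and the rigidity part of (b) forces $u$ to coincide (a.e.) with a translate of the radial decreasing function $u^*$.

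\textbf{Main obstacle.} The real work is in Step~2, particularly the symmetrization inequality (b) and its rigidity for the non-local form $A$. The inequality follows once one has the Bessel--kernel representation and applies Riesz rearrangement; the rigidity is more subtle and does not follow formally from strict inequalities in the Fourier picture, so I would either cite a non-local Polya--Szego rigidity theorem (the kernel $J_{m,c}$ is strictly symmetric decreasing, which is the assumption usually required), or else deduce it from the fact that equality in Riesz rearrangement for a strictly symmetric decreasing kernel forces the function itself to be a translate of its rearrangement.
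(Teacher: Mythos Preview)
Your argument is correct, but the route is genuinely different from the paper's. The paper works through the Caffarelli--Silvestre type extension $U$ of $u$ to $\mathbb{R}^{n+1}_+$ and argues by PDE methods: for sign definiteness it shows $|U|$ is again a ground state of the extended problem, upgrades to $C^{2,\alpha}$ regularity, and derives a contradiction from the Hopf boundary lemma at a putative zero of $u$; for radial symmetry it runs the moving-plane method on the local Neumann problem \eqref{extension-neumann}. Your proof stays on $\mathbb{R}^n$ and is purely variational: you exploit the Bessel-kernel (L\'evy measure) representation of $\sqrt{-c^2\Delta+m^2c^4}-mc^2$ to get $A(|v|)\le A(v)$ and $A(v^*)\le A(|v|)$, and then read off both conclusions from the equality cases after plugging into the Nehari/Rayleigh-quotient characterization of $c_*$.

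What each approach buys: your rearrangement route is shorter, avoids the extension entirely, and yields as a free by-product that the ground state is radially \emph{decreasing}. Its cost is exactly the point you flag---the rigidity in (b)---which you must import from the literature (the kernel $J_{m,c}$ is strictly symmetric decreasing, so Lieb's equality case for Riesz rearrangement, or the Frank--Seiringer non-local P\'olya--Szeg\H{o} rigidity, applies). The paper's moving-plane argument is heavier but proves more: its Proposition~\ref{symmetry} gives radial symmetry for \emph{every} positive $H^{1/2}$ solution of \eqref{main-eq}, not only minimizers, which your rearrangement method cannot reach. One small point: your step (a) yields only $u\ge 0$ a.e.\ (or $\le 0$), whereas the paper's Hopf argument gives strict positivity; you can close this by invoking regularity for \eqref{main-eq} and the strong maximum principle on the extension, but it is worth stating.
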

\noindent We note that if $u$ is a solution of \eqref{main-eq}, then $-u$ is also 
a solution of \eqref{main-eq}. Thus we may assume ground states found in 
Theorem \ref{existence} are positive throughout the paper.

The main concern of this paper is to justify nonrelativistic limit process $c \to \infty$ 
Applying the Taylor expansion to the expression $\sqrt{c^2|\xi|^2 +m^2c^4} - mc^2$, we see
\begin{equation}\label{eq-formal}
\begin{split}
mc^2 \left( \sqrt{-\frac{|\xi|^2}{m^2 c^2} +1} -1\right) &= mc^2 \left( 1+ \frac{1}{2} \left(\frac{|\xi|^2}{m^2 c^2}\right) + O\left( \left( \frac{|\xi|^2}{m^2 c^2}\right)^2 \right) -1\right)
\\
& = \frac{1}{2m} |\xi|^2+ O\left(\left( \frac{(|\xi|^2)^2}{m^3 c^2}\right)\right)
\end{split}
\end{equation}
so that the relativistic kinetic energy $\sqrt{-c^2 \Delta +m^2 c^4} - mc^2$ formally converges to the nonrelativistic kinetic energy $-\frac{1}{2m} \Delta$. In this regard, a natural question is to ask whether or not as $c \rightarrow \infty$
an one parameter family of solutions $\{ u_c\}_{c>1}$ of nonlinear the problem \eqref{main-eq}
converges, up to a subsequence, to a solution of the problem
\begin{equation*}
-\frac{1}{2m}\Delta u + \mu u = |u|^{p-2} u.
\end{equation*}
Though this question is quite fundamental, up to authors' knowledge, there has been no rigorous result dealing with this problem under this setting.
We remark that Lenzmann \cite{L2} considered the nonrelativistic limit for
$L^2$-constrained minimization problems with the hartree type nonlinearity in dimension three. For evolution problems, Tsutsumi \cite{Ts}, Machihara-Nakanishi-Ozawa \cite{MNO}, and Masmoudi-Naknishi \cite{MN} obtained the rigorous result for the nonrelativistic derivation of the nonlinear Schr\"odinger equations from the nonlinear Klein-Gordon equations.
The final result of this paper is to show the $H^1$ convergence of
the ground state solution of \eqref{main-eq} to a solution of the limit equation \eqref{limit-eq} as $c \to \infty.$
\begin{thm}[Nonrelativistic limit]\label{nrlimit}
Let $m,\, \mu > 0$ and $p \in (2, 2n/(n-1))$ be fixed.
For given $c \geq 1$, let $u_c$ be the positive radially symmetric ground state solution of \eqref{main-eq} obtained in Theorem \ref{existence}.
Then, by choosing a subsequence, $u_c$ converges to a unique positive radially symmetric solution $u_\infty$ of
\[
-\frac{1}{2m}\Delta u_\infty + \mu u_\infty = |u_\infty|^{p-2}u_\infty
\]
as $c \to \infty$ in the sense that
\begin{equation}
\lim_{c \rightarrow \infty} \| u_c - u_{\infty}\|_{H^1(\mathbb{R}^n)} = 0.
\end{equation}
\end{thm}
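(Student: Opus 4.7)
The plan is to derive uniform bounds on $\{u_c\}_{c\geq 1}$, extract a weakly convergent subsequence, identify its limit as the unique positive radial solution $u_\infty$ of the limit equation, and then upgrade weak convergence to strong $H^1$ convergence by matching the two Nehari-type identities. To begin, observe that the symbol $f_c(\xi):=\sqrt{c^2|\xi|^2+m^2c^4}-mc^2$ is nondecreasing in $c\geq 1$ and converges monotonically to $|\xi|^2/(2m)$ as $c\to\infty$. Using $u_\infty$ (whose existence and uniqueness as a positive radial ground state are classical) as a comparison function, I project it onto the Nehari manifold $\mathcal N_c$ of $I_c$ via the unique factor $t_c>0$ with $t_c u_\infty\in\mathcal N_c$; monotone convergence forces $t_c\to 1$ and $I_c(t_c u_\infty)\to I_\infty(u_\infty)$. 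By minimality of $u_c$, $\limsup_{c\to\infty} I_c(u_c)\leq I_\infty(u_\infty)$, which via the Nehari identities yields uniform $L^p$ and $H^{1/2}$ bounds on $u_c$ (the latter using $f_c\geq f_1(\xi)=\sqrt{|\xi|^2+m^2}-m$).

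The principal technical obstacle is upgrading to a uniform $H^1$ bound. Rewriting the equation on the Fourier side as $\hat u_c(\xi)=\widehat{|u_c|^{p-2}u_c}(\xi)/(f_c(\xi)+\mu)$, I would verify that the multiplier $(1+|\xi|^2)^{1/2}/(f_c(\xi)+\mu)$ is bounded uniformly in $c\geq 1$: in the nonrelativistic regime $|\xi|\leq mc$ one has $f_c(\xi)\gtrsim |\xi|^2/m$, while in the relativistic regime $|\xi|\geq mc$ one has $f_c(\xi)\gtrsim c|\xi|$, and a case analysis shows that the ratio stays bounded in both. This yields $\|u_c\|_{H^1}\leq C\|u_c\|_{L^{2(p-1)}}^{p-1}$, and a standard integrability bootstrap based on the $L^q$-continuity of the inverse $(f_c+\mu)^{-1}$ (which itself must be shown to be uniform in $c$) raises the integrability of $u_c$ and closes the estimate. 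The delicate point is that the smoothing of $1/(f_c+\mu)$ degrades from order two in the nonrelativistic regime to order one in the relativistic one, so the multiplier estimates and the bootstrap must be performed uniformly in $c$.

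With uniform $H^1$ bounds in hand, a subsequence satisfies $u_c\rightharpoonup u_\star$ weakly in $H^1$, and by the compact Sobolev embedding of $H^1_{\mathrm{rad}}(\R^n)$ into $L^p(\R^n)$ the convergence is strong in $L^p$. Testing against $\varphi\in C_c^\infty(\R^n)$ and using dominated convergence on the Fourier side (since $0\leq f_c(\xi)\leq|\xi|^2/(2m)$ with pointwise convergence) identifies $u_\star$ as a nonnegative radial distributional solution of the limit equation; the Nehari lower bound $\|u_c\|_{L^p}\geq c_0>0$ passes to the limit, so $u_\star\not\equiv 0$, and uniqueness then forces $u_\star=u_\infty$. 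To promote weak convergence to strong $H^1$ convergence, I test the difference of the equations for $u_c$ and $u_\infty$ against $u_c-u_\infty$. The nonlinear term $\int(|u_c|^{p-2}u_c-|u_\infty|^{p-2}u_\infty)(u_c-u_\infty)\,dx$ vanishes by strong $L^p$ convergence, and the linearization error $\int(f_c(\xi)-|\xi|^2/(2m))\hat u_\infty\,\overline{(\hat u_c-\hat u_\infty)}\,d\xi$ vanishes by dominated convergence (using the $H^2$ regularity of $u_\infty$). The resulting identity $\int(f_c+\mu)|\hat u_c-\hat u_\infty|^2\,d\xi\to 0$ is converted into $\|u_c-u_\infty\|_{H^1(\R^n)}\to 0$ by splitting the Fourier integral into low-frequency ($|\xi|\leq R$, where $f_c(\xi)\geq C|\xi|^2$ holds uniformly once $c\geq R/m$) and high-frequency ($|\xi|>R$, where tightness inherited from the uniform regularity bounds makes the contribution small) regimes.
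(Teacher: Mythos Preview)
Your outline is essentially correct, but it follows a genuinely different route from the paper's at the two decisive steps.

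\textbf{Uniform $H^1$ bound.} You propose the multiplier estimate $(1+|\xi|^2)^{1/2}/(f_c(\xi)+\mu)\leq C$ together with a bootstrap to control $\|u_c\|_{L^{2(p-1)}}$. The paper instead \emph{squares} the equation: from $\sqrt{-c^2\Delta+m^2c^4}\,u_c=(mc^2-\mu)u_c+u_c^{p-1}$ one gets the exact identity
\[
c^2\|\nabla u_c\|_{L^2}^2+m^2c^4\|u_c\|_{L^2}^2=\bigl\|(mc^2-\mu)u_c+u_c^{p-1}\bigr\|_{L^2}^2,
\]
and after expanding and dividing by $c^2$ this yields the \emph{sharp} inequality
\[
\limsup_{c\to\infty}\Bigl(\|\nabla u_c\|_{L^2}^2+2m\mu\|u_c\|_{L^2}^2\Bigr)\leq 2m\limsup_{c\to\infty}\int u_c^p\,dx.
\]
The dangerous term $\int u_c^{2(p-1)}$ now carries a factor $1/c^2$ and is absorbed via the same $L^{2n/(n-1)}$--$L^{2n/(n-2)}$ interpolation you would use in your bootstrap.

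\textbf{Strong $H^1$ convergence.} Here the sharp bound pays off. Once $u_c\to u_\infty$ in $L^p$ (compact radial embedding), the right-hand side above becomes $2m\int u_\infty^p=\|\nabla u_\infty\|_{L^2}^2+2m\mu\|u_\infty\|_{L^2}^2$ by the Nehari identity for $u_\infty$, so $\limsup\|u_c\|\leq\|u_\infty\|$ in the weighted $H^1$ norm; combined with weak convergence this forces strong $H^1$ convergence in one line. Your route---testing the difference of the two equations against $u_c-u_\infty$ and then splitting frequencies---also works, but the high-frequency piece $\int_{|\xi|>R}(1+|\xi|^2)|\hat u_c-\hat u_\infty|^2\,d\xi$ is \emph{not} small merely from a uniform $H^1$ bound. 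To justify your ``tightness'' claim you must go back to $\hat u_c=\widehat{u_c^{p-1}}/(f_c+\mu)$, use that $|\xi|^2/(f_c+\mu)^2$ is uniformly bounded, and then invoke the \emph{strong} $L^2$ convergence of $u_c^{p-1}$ (coming from the compact embedding $H^1_{\mathrm{rad}}\hookrightarrow L^{2(p-1)}$, valid since $2(p-1)<2n/(n-2)$) to get uniform smallness of the Fourier tails. As written, your phrase ``tightness inherited from the uniform regularity bounds'' is ambiguous and would be the one place a careful reader might object.

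In short: both arguments are valid. The paper's squaring trick is shorter and bypasses the frequency-splitting endgame entirely; your multiplier/bootstrap approach is more generic and would transfer more readily to problems where no such algebraic identity is available.
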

It is a nontrivial issue to get the convergence in $H^{1}(\mathbb{R}^n)$ space because in the formal expansion \eqref{eq-formal} the error term $O\left(\frac{|\xi|^4}{m^3 c^2}\right)$ requires a higher regularity for taking the limit $c \rightarrow \infty$ though the natural space to find the solution $u_c$ would be $H^{1/2} (\mathbb{R}^n)$ space.  To go around this difficulty we shall first take a limit in a very weak sense, and then we will obtain a sharp uniform bound in $H^1 (\mathbb{R}^n)$ space by some tricky analysis. It will enable us to improve the convergence result to $H^1 (\mathbb{R}^n)$ space. 

The rest of the paper is organized as follows. In Section 2 we consider the extension problem which localizes the nonlocal problem \eqref{eq-i1}. In Section 3 we find the ground state solutions. Section 4 is devoted to prove the positiveness and the symmetry result of the ground state solutions. In Section 5 we shall obtain the important uniform boundedness of the ground states solutions. Then we shall prove the nonrelativistic limit in Section 6.


\section{The extension problem}
We review the well known fact (see \cite{CN2}) that for any $u \in H^{1/2}(\mathbb{R}^n)$, there exists a unique solution $U \in H^1(\mathbb{R}^{n+1}_+)$ of the following boundary value problem
\begin{equation}\label{extension-dirichlet}
\begin{array}{rcl}
(-c^2\Delta_{x,y} +m^2c^4)U(x,y) & = & 0  \quad \text{ in } \mathbb{R}^{n+1}_+ := \mathbb{R}^n \times (0,\infty), \\
U(x,0) & = & u(x) \quad \text{ in } \partial\mathbb{R}^{n+1}_+ = \mathbb{R}^n
\end{array}
\end{equation}
such that
\begin{equation}\label{normal-der}
-c\frac{\partial U}{\partial y}(x,0) = \sqrt{-c^2\Delta+m^2c^4}\,u (x),\quad x \in \mathbb{R}^n
\end{equation}
in distribution sense.
Then for any solution $u \in H^{1/2}(\mathbb{R}^n)$ of \eqref{main-eq}, a unique solution $U$ of \eqref{extension-dirichlet}
with boundary value $u$ satisfies
\begin{equation}\label{extension-neumann}
\begin{array}{rcl}
(-c^2\Delta_{x,y} +m^2c^4)U(x,y) & = & 0  \quad \text{ in } \mathbb{R}^{n+1}_+ \\
-c\frac{\partial U}{\partial y}(x,0) & = & (mc^2-\mu)u(x) + |u|^{p-2}u(x) \quad \text{ in } \partial\mathbb{R}^{n+1}_+ = \mathbb{R}^n
\end{array}
\end{equation}
and thus is a critical point of the functional
\begin{multline}
I_e(U) := \frac{1}{2c}\int_{\mathbb{R}^{n+1}_+}c^2|\nabla U(x,y)|^2+m^2c^4U(x,y)^2\,dxdy \\
+\frac{(-mc^2+\mu)}{2}\int_{\mathbb{R}^n}U(x,0)^2\,dx -\frac{1}{p}\int_{\mathbb{R}^n}|U(x,0)|^p\,dx.
\end{multline}
Conversely, if $U \in H^1(\mathbb{R}^{n+1}_+)$ is a critical point of $I_e$, then its trace $U(x,0)$ belongs to
$H^{1/2}(\mathbb{R}^n)$ and is a critical point of $I$ defined in \eqref{eq-E}.

\begin{lem}\label{lem-trace-ineq}
Let $V \in H^1(\mathbb{R}^{n+1}_+)$ and $v(x) := V(x,0)$. Then, it holds that
\begin{equation}\label{trace-ineq}
\int_{\mathbb{R}^n}(c^2|\xi|^2+m^2c^4)^{1/2}|\hat v|^2\,d\xi \leq \frac{1}{c}\int_{\mathbb{R}^{n+1}_+}c^2|\nabla V|^2 +m^2c^4V^2\,dxdy.
\end{equation}
The equality holds if and only if $V$ satisfies \eqref{extension-dirichlet} with boundary value $v$.
\end{lem}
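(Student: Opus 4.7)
The plan is to reduce \eqref{trace-ineq} to a one-dimensional variational problem via the partial Fourier transform in the tangential variable $x$, and then to close it by completing the square. Writing $\hat V(\xi,y) := (\mathcal{F}_x V)(\xi,y)$ and applying Plancherel slice by slice in $y$, the right-hand side of \eqref{trace-ineq} becomes
\begin{equation*}
\frac{1}{c}\int_0^\infty\!\!\int_{\mathbb{R}^n}c^2|\partial_y\hat V(\xi,y)|^2+(c^2|\xi|^2+m^2c^4)|\hat V(\xi,y)|^2\,d\xi\,dy.
\end{equation*}
By Tonelli (the integrand is nonnegative), the inequality will follow once I prove that for a.e.\ $\xi \in \mathbb{R}^n$ and every $W\in H^1((0,\infty))$ with $W(0)=\hat v(\xi)$,
\begin{equation*}
\frac{1}{c}\int_0^\infty c^2|W'(y)|^2+(c^2|\xi|^2+m^2c^4)|W(y)|^2\,dy\;\geq\;\sqrt{c^2|\xi|^2+m^2c^4}\;|\hat v(\xi)|^2.
\end{equation*}

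To prove this one-dimensional inequality, I would set $\alpha:=\sqrt{|\xi|^2+m^2c^2}$, so that $c\alpha=\sqrt{c^2|\xi|^2+m^2c^4}$, and use the completion-of-the-square identity
\begin{equation*}
c^2|W'|^2+c^2\alpha^2|W|^2 \;=\; c^2\,|W'+\alpha W|^2\;-\;c^2\alpha\,\tfrac{d}{dy}|W|^2.
\end{equation*}
Integrating from $0$ to $\infty$ and using that $W\in H^1((0,\infty))$ forces $W(\infty)=0$, the boundary term produces
\begin{equation*}
\int_0^\infty c^2|W'|^2+c^2\alpha^2|W|^2\,dy \;=\; c^2\!\int_0^\infty|W'+\alpha W|^2\,dy + c^2\alpha\,|\hat v(\xi)|^2 \;\geq\; c^2\alpha\,|\hat v(\xi)|^2.
\end{equation*}
Dividing by $c$ and integrating back in $\xi$ delivers \eqref{trace-ineq}.

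For the equality case, equality in the display above forces $W'+\alpha W=0$ for a.e.\ $\xi$, i.e.\ $\hat V(\xi,y)=\hat v(\xi)\,e^{-\sqrt{|\xi|^2+m^2c^2}\,y}$. A direct substitution shows that this is precisely the Fourier profile of the solution of \eqref{extension-dirichlet}: for such $\hat V$ one has $\partial_y^2\hat V=(|\xi|^2+m^2c^2)\hat V$, hence $(-c^2\Delta_{x,y}+m^2c^4)V=0$ with trace $v$, and uniqueness of the extension identifies $V$ as the harmonic lifting of $v$. The only mildly technical points are the Tonelli swap (automatic by nonnegativity) and the decay $W(\infty)=0$ (standard for $H^1$ of a half-line, using a density argument by smooth compactly supported functions in $y$); neither is a real obstacle. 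The genuine content of the argument is the square-completion identity, which isolates exactly the $c\alpha\,|W(0)|^2$ boundary term matching the desired trace bound.
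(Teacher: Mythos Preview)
Your argument is correct, but it differs from the paper's. The paper proves the lemma variationally: it observes that the solution $W_0$ of \eqref{extension-dirichlet} with trace $v$ is the unique minimizer of $\int_{\mathbb{R}^{n+1}_+} c^2|\nabla W|^2+m^2c^4 W^2\,dxdy$ among all $W\in H^1(\mathbb{R}^{n+1}_+)$ with $W(\cdot,0)=v$, and that for this $W_0$ the two sides of \eqref{trace-ineq} coincide (by testing \eqref{extension-dirichlet} against $W_0$ and invoking the normal derivative identity \eqref{normal-der}). The inequality and the equality case then follow immediately from minimality and uniqueness of $W_0$. Your route instead takes the partial Fourier transform in $x$, reduces to a one-dimensional problem for each frequency $\xi$, and closes it by the completion-of-the-square identity; equality forces $\hat V(\xi,y)=\hat v(\xi)e^{-\sqrt{|\xi|^2+m^2c^2}\,y}$, which you then recognize as the Fourier profile of the extension. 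The paper's proof is shorter because it recycles the already-stated facts about the extension, whereas your argument is more self-contained: it does not presuppose \eqref{normal-der}, and as a by-product it rederives the explicit Poisson-type formula for the extension. Both are standard; the only small technical points in your version (that $\hat V(\xi,\cdot)\in H^1(0,\infty)$ for a.e.\ $\xi$ with trace $\hat v(\xi)$, and the vanishing at infinity needed for the boundary term) are routine and you have flagged them appropriately.
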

\begin{proof}
If $V$ satisfies \eqref{extension-dirichlet} with boundary value $v$, then we deduce by testing $V$ to
\eqref{extension-dirichlet} and using \eqref{normal-der} that the equality of \eqref{trace-ineq} holds.
Suppose that $V$ is an arbitrary function in $H^1(\mathbb{R}^{n+1}_+)$.
Let $W_0$ be a unique minimizer of the problem
\[
\min\left\{ \int_{\mathbb{R}^{n+1}_+}c^2|\nabla W|^2 +m^2c^4W^2\,dxdy ~|~ W(x,0) = v(x) \right\},
\]
which is a solution of \eqref{extension-dirichlet} with boundary data $v(x)$.
Then,
\[
\int_{\mathbb{R}^n}(c^2|\xi|^2+m^2c^4)^{1/2}|\hat v|^2\,d\xi
=\frac1c\int_{\mathbb{R}^{n+1}_+}c^2|\nabla W_0|^2 +m^2c^4W_0^2\,dxdy
\leq \frac1c\int_{\mathbb{R}^{n+1}_+}c^2|\nabla V|^2 +m^2c^4V^2\,dxdy.
\]
This completes the proof.
\end{proof}

The following fractional Sobolev embedding is well-known in literature.
\begin{lem}\label{fractional-embedding}
For any $q \in [2, 2n/(n-1)]$, $H^{1/2}(\mathbb{R}^n)$ is continuously embedded in $L^q(\mathbb{R}^n)$.
In other words, one has
\[
\left(\int_{\mathbb{R}^n}|v|^q\,dx\right)^{1/q}
\leq C\left(\int_{\mathbb{R}^n}(|\xi|^2+1)^\frac12|\hat{v}|^2\,d\xi\right)^{1/2}.
\]
Let $H^{1/2}_r(\mathbb{R}^n)$ be the set of radially symmetric functions in $H^{1/2}(\mathbb{R}^n)$.
Then for any $q \in (2, 2n/(n-1))$, the embedding $H^{1/2}_r(\mathbb{R}^n) \hookrightarrow L^q(\mathbb{R}^n)$
is compact.
\end{lem}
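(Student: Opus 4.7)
The plan is to treat the two halves of the lemma separately, proving the continuous embedding first and then deducing the compact radial embedding.

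For the continuous embedding, I start with the endpoint $q = 2n/(n-1)$. Setting $g := (1-\Delta)^{1/2} v$ so that $\hat g(\xi) = (1+|\xi|^2)^{1/4}\hat v(\xi)$ and $\|g\|_{L^2}^2 = \int_{\mathbb{R}^n}(1+|\xi|^2)^{1/2}|\hat v|^2\,d\xi$, we have $v = G_{1/2} \ast g$, where $G_{1/2}$ is the Bessel kernel of order $1/2$. Since $G_{1/2}(x)$ is pointwise dominated by $c_n|x|^{-(n-1)}$, the Hardy--Littlewood--Sobolev inequality yields
\[
\|v\|_{L^{2n/(n-1)}(\mathbb{R}^n)} \leq C\|g\|_{L^2(\mathbb{R}^n)} \leq C\Bigl(\int_{\mathbb{R}^n}(1+|\xi|^2)^{1/2}|\hat v|^2\,d\xi\Bigr)^{1/2}.
\]
Combining this with the trivial bound $\|v\|_{L^2} \leq \|v\|_{H^{1/2}}$ and H\"older interpolation gives the embedding $H^{1/2}(\mathbb{R}^n) \hookrightarrow L^q(\mathbb{R}^n)$ for every $q \in [2, 2n/(n-1)]$.

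For the compactness of $H^{1/2}_r \hookrightarrow L^q$ with $q \in (2, 2n/(n-1))$, I let $\{v_k\}$ be a bounded sequence in $H^{1/2}_r$, extract a weakly convergent subsequence $v_k \rightharpoonup v$ in $H^{1/2}_r$, and set $w_k := v_k - v$. I invoke the fractional analogue of P.~L.~Lions' vanishing lemma: if $\{w_k\}$ is bounded in $H^{1/2}$ and
\[
\eta_k := \sup_{y \in \mathbb{R}^n}\int_{B(y,1)}|w_k|^2\,dx \longrightarrow 0,
\]
then $w_k \to 0$ strongly in $L^q(\mathbb{R}^n)$ for every $q \in (2, 2n/(n-1))$. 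Assuming this tool, it suffices to show $\eta_k \to 0$. Suppose toward contradiction that $\eta_k \geq \delta > 0$ along a subsequence, realized at centres $y_k \in \mathbb{R}^n$. If $\{y_k\}$ stays bounded, then the balls $B(y_k,1)$ lie inside a fixed ball $B(0,R)$, and the local fractional Rellich--Kondrachov theorem gives $w_k \to 0$ in $L^2(B(0,R))$, contradicting $\int_{B(y_k,1)}|w_k|^2 \geq \delta$.

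If $|y_k|\to\infty$, radial symmetry is the key: $\int_{B(y'_k,1)}|w_k|^2 = \int_{B(y_k,1)}|w_k|^2 \geq \delta$ for every $y'_k$ with $|y'_k|=|y_k|$. Since the sphere $\{|y|=|y_k|\}$ admits at least $c_n|y_k|^{n-1}$ disjoint unit balls, summing gives $\|w_k\|_{L^2}^2 \geq c_n\delta|y_k|^{n-1} \to \infty$, contradicting the uniform $H^{1/2}$-bound. Hence $\eta_k \to 0$, Lions' lemma applies, and $w_k \to 0$ in $L^q$. The main obstacle is the fractional vanishing lemma itself; it can be proved by a dyadic partition-of-unity in frequency followed by a Gagliardo--Nirenberg-type interpolation between the local $L^2$-mass bound and the critical embedding established above. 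Once it is in hand, the radial-packing argument reduces the compactness to local Rellich.
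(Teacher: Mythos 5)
The paper itself does not prove this lemma at all; it simply records it as ``well-known in literature,'' so there is no internal argument to compare yours against. Judged on its own terms, your compactness half is the standard and correct route (Lions-type vanishing lemma plus the radial sphere-packing argument: rotation invariance of $w_k$ turns one concentrating unit ball at distance $|y_k|\to\infty$ into $\gtrsim |y_k|^{n-1}$ disjoint balls of equal mass, contradicting the uniform $L^2$ bound; bounded centres are handled by local Rellich), and deferring the fractional vanishing lemma to the literature is reasonable since the lemma being proved is itself a textbook fact.

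The continuous-embedding half, however, contains a concrete error. The Bessel kernel of order $1/2$ satisfies $G_{1/2}(x)\sim c_n|x|^{1/2-n}=c_n|x|^{-(n-1/2)}$ as $x\to 0$, so your claimed pointwise bound $G_{1/2}(x)\le c_n|x|^{-(n-1)}$ is false near the origin (the kernel is \emph{more} singular than that). Moreover, even if one granted that bound, Hardy--Littlewood--Sobolev with kernel exponent $\lambda=n-1$ and $p=2$ gives $1/q=1/2+\lambda/n-1=1/2-1/n$, i.e.\ the $H^1$-critical exponent $q=2n/(n-2)$ (meaningless for $n=2$), not the claimed $q=2n/(n-1)$; so the deduction as written does not produce the stated inequality. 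The fix is simple: use the correct global bound $G_{1/2}(x)\le c_n|x|^{-(n-1/2)}$ (comparable near $0$, exponentially small at infinity) and apply HLS with $\lambda=n-1/2$, which yields exactly $1/q=1/2-1/(2n)$, i.e.\ $q=2n/(n-1)$, and then interpolate with $L^2$ as you do. (Also a notational slip: $g$ should be $(1-\Delta)^{1/4}v$, consistent with $\hat g=(1+|\xi|^2)^{1/4}\hat v$, not $(1-\Delta)^{1/2}v$.) With these corrections your proof is sound.
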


We say $U$ is a ground state solution of \eqref{extension-neumann} if $U$ is a critical point of $I_e$ and satisfies
\[
I_e(U) = \min\{ I_e(V) ~|~ V \in H^1(\mathbb{R}^{n+1}_+) \setminus \{0\},\, I_e'(V) = 0 \}.
\]

\begin{prop}\label{equivalence}
Let $U$ be a ground state solution of \eqref{extension-neumann}. Then $U(x,0)$ is a ground state solution
of $\eqref{main-eq}$.
Conversely, let $u$ be a ground state solution of \eqref{main-eq}. Then, a unique solution $U$ of \eqref{extension-dirichlet} with boundary data $u$ is a ground state solution of \eqref{extension-neumann}.
\end{prop}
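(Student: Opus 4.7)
The plan is to use Lemma~\ref{lem-trace-ineq} as the essential bridge between the two functionals. Its content can be recast as the pointwise inequality
\[
I(v) \leq I_e(V) \quad \text{for every } V \in H^1(\mathbb{R}^{n+1}_+) \text{ with trace } v := V(\cdot,0),
\]
with equality precisely when $V$ solves the Dirichlet extension problem \eqref{extension-dirichlet}. In addition, any critical point $W$ of $I_e$ satisfies the interior equation $(-c^2\Delta_{x,y}+m^2c^4)W=0$ appearing in \eqref{extension-neumann} and is therefore the Dirichlet extension of its trace, so the equality $I_e(W)=I(W(\cdot,0))$ holds for such $W$. Together with the bijection between critical points of $I$ and of $I_e$ via trace and Dirichlet extension that the excerpt has already recorded, these two remarks reduce the proposition to a direct energy comparison in each direction.

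\emph{Direction 1.} If $U$ is a ground state of \eqref{extension-neumann} with trace $u$, I take any nontrivial critical point $v \in H^{1/2}(\mathbb{R}^n)\setminus\{0\}$ of $I$, form its Dirichlet extension $V$ (automatically a nontrivial critical point of $I_e$), and invoke the ground state property of $U$ together with the above observations to get
\[
I(u) = I_e(U) \leq I_e(V) = I(v).
\]
Hence $u$ is a ground state of \eqref{main-eq}.

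\emph{Direction 2.} Conversely, given a ground state $u$ of \eqref{main-eq} with Dirichlet extension $U$, I must show $I_e(U) \leq I_e(W)$ for every nontrivial critical point $W$ of $I_e$. Setting $w := W(\cdot,0)$, the observations above give $I_e(W) = I(w)$ and the ground state property of $u$ then yields $I_e(U) = I(u) \leq I(w) = I_e(W)$, provided $w \neq 0$. This last nontriviality is the only small technical point: if $w \equiv 0$, testing $I_e'(W)=0$ against $W$ reduces to
\[
\frac{1}{c}\int_{\mathbb{R}^{n+1}_+} c^2|\nabla W|^2 + m^2 c^4 W^2 \, dx\, dy = 0,
\]
forcing $W \equiv 0$, a contradiction. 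I do not foresee any deeper obstacle.
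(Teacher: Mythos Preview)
Your proposal is correct and follows essentially the same approach as the paper: both arguments rest on Lemma~\ref{lem-trace-ineq} to obtain $I_e(W)=I(W(\cdot,0))$ for every critical point $W$ of $I_e$, together with the trace/extension bijection between critical points of $I_e$ and $I$, so that ground states correspond. The paper packages this as the statement that the trace map is an ``isometry'' between the two critical sets, whereas you carry out the two inequalities explicitly and add the (useful) verification that a nontrivial critical point of $I_e$ has nontrivial trace.
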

\begin{proof}
Let $S$ and $S_e$ be the sets of critical points of $I$ and $I_e$ respectively.
Then the trace map $T: S_e \subset H^1(\mathbb{R}^{n+1}_+) \to S \subset H^{1/2}(\mathbb{R}^n)$
given by $T(U) = U(x,0)$ is an isometry.
To see this, note first that for any $u \in S$, the inverse image of $T$ exists and is given by
the unique solution of $U$ of \eqref{extension-dirichlet} with boundary value $u$.
Then Lemma \ref{lem-trace-ineq} says that $T$ is an isometry between $S_e$ and $S$.
Therefore, we have
\[
I(T(U)) = I_e(U) \quad \text{ or equivalently } \quad I(u) = I_e(T^{-1}(u)),
\]
which proves the proposition.
\end{proof}

\section{Existence result}
Fix $c \geq 1,\, m,\,\mu > 0$. For $U \in H^{1/2}(\mathbb{R}^{n+1}_+)$, we define a norm
\begin{equation}
\|U\|^2 = \int_{\mathbb{R}^{n+1}_+} c^2|\nabla U(x,y)|^2+m^2c^4U(x,y)^2\, dxdy
-c(mc^2-\mu)\int_{\mathbb{R}^n}U(x,0)^2\,dx
\end{equation}
which is equivalent to the standard norm
\[
\left( \int_{\mathbb{R}^{n+1}} |\nabla U(x,y)|^2+U(x,y)^2\, dxdy \right)^{1/2}
 \]
of the Sobolev space $H^1(\mathbb{R}^{n+1}_+)$. It is definite when $\mu \geq mc^2$. So, we assume that $\mu < mc^2$.
To show the equivalence we first recall from \cite{CN2} the inequality
\[
\int_{A}U(x,0)^2\,dx \leq 2\left(\int_{A\times[0,\infty)}U(x,y)^2\,dxdy\right)^{1/2}
\left(\int_{A\times[0,\infty)}|\nabla U(x,y)|^2\,dxdy\right)^{1/2}
\]
where $A \subset \mathbb{R}^n$ is any measurable set.
Applying Young's inequality,
we have 
\begin{equation}\label{ineq-1}
\int_{A\times[0,\infty)} c^2 |\nabla U (x,y)|^2 + m^2 c^4 U(x,y)^2\, dx dy \geq m c^3 \int_{A} U(x,0)^2 dx.
\end{equation}
Let $\alpha= \frac{c(mc^2 - \mu )}{mc^3} \in (0,1)$. Then, using the above inequality with $A = \mathbb{R}^n$,
we have
\begin{equation}
\begin{split}
&\|U\|^2 =\int_{\mathbb{R}^{n+1}_{+}}c^2 |\nabla U (x,y)|^2 + m^2 c^4 U (x,y)^2\, dx dy - c (mc^2 - \mu) \int_{\mathbb{R}^n} U(x,0)^2\, dx
\\
&\qquad\quad \geq (1-\alpha) \int_{\mathbb{R}^{n+1}_{+}}c^2 |\nabla U (x,y)|^2 + m^2 c^4 U (x,y)^2\, dx dy,
\end{split}
\end{equation}
which shows the equivalence.

Now, we prove Theorem \ref{existence}.
By Proposition \ref{equivalence}, we may find a ground state solution of \eqref{extension-neumann}.
We shall achieve this by finding a minimizer of $I_e$ on the Nehari manifold
\begin{equation}
\mathcal{N}_e := \{V \in H^1(\mathbb{R}^{n+1}_+) ~|~ J_e(V) = 0,\, V \not\equiv 0 \},
\end{equation}
where $J_e(V) := I'_e(V)V$, i.e.,
\begin{equation}\label{eq-je}
\begin{split}
J_e(V) =& \frac{1}{c}\int_{\mathbb{R}^{n+1}_{+}}c^2 |\nabla V (x,y)|^2 + m^2 c^4 V(x,y)^2 dx dy
\\
&\qquad +  (-mc^2 + \mu) \int_{\mathbb{R}^n} V(x,0)^2 dx -  \int_{\mathbb{R}^n} |V(x,0)|^p dx.
\end{split}
\end{equation}

\begin{lem}\label{nonmepty}
$\mathcal{N}_e$ is nonempty.
\end{lem}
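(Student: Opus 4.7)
The natural strategy is the standard fibering trick: pick a suitable test function $V \in H^1(\mathbb{R}^{n+1}_+)$ with nonzero trace $V(x,0)$, and look for $t>0$ such that $tV \in \mathcal{N}_e$. Plugging $tV$ into \eqref{eq-je} gives
\begin{equation*}
J_e(tV) = t^2\bigl[A(V)\bigr] - t^p \int_{\mathbb{R}^n} |V(x,0)|^p\,dx,
\end{equation*}
where
\begin{equation*}
A(V) := \frac{1}{c}\int_{\mathbb{R}^{n+1}_+} c^2|\nabla V|^2 + m^2c^4V^2\,dxdy + (\mu - mc^2)\int_{\mathbb{R}^n} V(x,0)^2\,dx.
\end{equation*}
Since $p>2$, if both $A(V)>0$ and $\int |V(x,0)|^p\,dx >0$, the equation $J_e(tV)=0$ has the explicit positive root $t^* = (A(V)/\int|V(x,0)|^p\,dx)^{1/(p-2)}$, and $t^*V \in \mathcal{N}_e$.

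Thus the task reduces to producing a single $V \in H^1(\mathbb{R}^{n+1}_+)$ for which $V(x,0)\not\equiv 0$ and $A(V)>0$. For concreteness I would take $V(x,y) = \phi(x) e^{-y}$ with $\phi \in C_c^\infty(\mathbb{R}^n)\setminus\{0\}$, which clearly lies in $H^1(\mathbb{R}^{n+1}_+)$ and has trace $\phi\not\equiv 0$, so the $L^p$ integral on the boundary is strictly positive.

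The positivity $A(V)>0$ is exactly what the norm-equivalence argument in the passage preceding the lemma delivers. Indeed, $A(V) = \tfrac{1}{c}\|V\|^2$ with $\|\cdot\|$ the norm defined at the start of the section, and the inequality \eqref{ineq-1} applied with $A=\mathbb{R}^n$, combined with $\alpha = (mc^2-\mu)/(mc^2) \in (0,1)$, yields
\begin{equation*}
\|V\|^2 \geq (1-\alpha)\int_{\mathbb{R}^{n+1}_+} c^2|\nabla V|^2 + m^2c^4 V^2\,dxdy > 0
\end{equation*}
whenever $V\not\equiv 0$. Hence $A(V)>0$ for the chosen test function, the scalar $t^*$ above is well defined, and $t^*V$ belongs to $\mathcal{N}_e$.

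There is essentially no hard step here; the only thing to be careful about is that the sign condition $\mu < mc^2$ assumed in the section is used implicitly, via the norm equivalence, to ensure $A(V)>0$ for every nonzero $V$. In the degenerate case $\mu \geq mc^2$ the quantity $A(V)$ is already manifestly positive for all $V\not\equiv 0$, so the same fibering argument goes through a fortiori.
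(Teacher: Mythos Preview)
Your proof is correct and follows essentially the same fibering argument as the paper: the paper considers $I_e(tV)$ as a function of $t$, observes it has a positive maximum $t_0$, and deduces $J_e(t_0V)=0$, which is equivalent to your direct solution of $J_e(tV)=0$ since $J_e(tV)=t\,\tfrac{d}{dt}I_e(tV)$. Your version is in fact slightly more careful, as you make explicit the requirement that $V(x,0)\not\equiv 0$ (which the paper's ``any nonzero $V$'' glosses over) and you spell out the positivity of $A(V)$ via the norm equivalence.
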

\begin{proof}
Take any nonzero $V \in H^1(\mathbb{R}^{n+1}_+)$. Then as a function of $t$, it is easy to see that
\[
I_e(tV) := \frac{t^2}{2c}\left(\int_{\mathbb{R}^{n+1}_+}c^2|\nabla V|^2+m^2c^4V^2\,dxdy
-c(mc^2-\mu)\int_{\mathbb{R}^n}V(x,0)^2\,dx\right)
-\frac{t^p}{p}\int_{\mathbb{R}^n}|V(x,0)|^p\,dx.
\]
attains its strict local maximum at some $t_0 \in (0,\,\infty)$.
By differentiating with respect to $t$, we get $I_e'(t_0 V)V = 0$, which says $t_0 V \in \mathcal{N}_e$.
\end{proof}

\begin{prop}\label{minimizer}
Let
\[
M_e : = \min_{V \in \mathcal{N}_e}I_e(V).
\]
Then there exists radially symmetric $U \in \mathcal{N}_e$ such that $I_e(U) = M_e$.
\end{prop}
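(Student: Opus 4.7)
My plan is to reduce the minimization to the $H^{1/2}(\mathbb{R}^n)$ side via the Dirichlet extension \eqref{extension-dirichlet} and then apply a symmetrization-plus-compactness argument. Given any $V \in \mathcal{N}_e$, let $v := V(\cdot,0)$ and let $W$ be the Dirichlet extension of $v$; then Lemma \ref{lem-trace-ineq} yields the identities $I_e(W) = I(v)$ and $J_e(W) = I'(v)v$, and replacing $V$ by $W$ (the interior energy weakly decreases while the boundary terms are unchanged) followed by rescaling $W \mapsto tW$ with the unique $t \in (0,1]$ needed to restore the Nehari condition does not increase $I_e$. Consequently $M_e = M := \inf\{I(v) : v \in \mathcal{N}\}$, where $\mathcal{N} := \{v \in H^{1/2}(\mathbb{R}^n)\setminus\{0\} : I'(v)v = 0\}$, so it suffices to produce a radial minimizer of $I$ on $\mathcal{N}$. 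Write $A(v) := \tfrac12\int_{\mathbb{R}^n}(\sqrt{c^2|\xi|^2+m^2c^4}-mc^2+\mu)|\hat v|^2\,d\xi$ and $B(v) := \tfrac1p\int_{\mathbb{R}^n}|v|^p\,dx$, so $I = A - B$ and the Nehari condition reads $2A(v) = pB(v)$.

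First I would verify $M > 0$. On $\mathcal{N}$ one has $I(v) = \tfrac{p-2}{p}A(v)$, and the fractional Sobolev embedding (Lemma \ref{fractional-embedding}) combined with the equivalence of $A(v)$ and $\|v\|^2_{H^{1/2}}$ gives $B(v) \leq C A(v)^{p/2}$; substituting into $2A(v) = pB(v)$ forces $A(v) \geq c_0 > 0$ uniformly on $\mathcal{N}$, whence $M \geq \tfrac{p-2}{p}c_0 > 0$.

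Next I would symmetrize. For $v \in \mathcal{N}$ let $v^*$ be the Schwarz rearrangement; then $B(v^*) = B(v)$, the $L^2$ part of $A$ is invariant, and the nonlocal piece $\int\sqrt{c^2|\xi|^2+m^2c^4}\,|\hat v|^2\,d\xi$ decreases under rearrangement, since the corresponding Fourier multiplier can be realized (via subordination, or equivalently via the Dirichlet extension and Steiner symmetrization in the $x$-variable) as a convolution with a nonnegative radially decreasing kernel to which the Riesz rearrangement inequality applies. Thus $A(v^*) \leq A(v)$, and rescaling $v^* \mapsto t_0 v^*$ with the unique $t_0 \in (0,1]$ satisfying $2t_0^2 A(v^*) = pt_0^p B(v^*)$ puts $t_0 v^* \in \mathcal{N}$. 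Since $I(tv^*) = t^2 A(v^*) - t^p B(v^*) \leq t^2 A(v) - t^p B(v) = I(tv)$ for every $t>0$ and $v \in \mathcal{N}$ means $I(v) = \max_{t>0}I(tv)$, one obtains $I(t_0 v^*) \leq I(v)$. Hence I may assume a minimizing sequence $\{v_k\}\subset\mathcal{N}$ consisting of radial functions.

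Finally, the uniform bound on $A(v_k)$ gives $\|v_k\|_{H^{1/2}}$ bounded, so a subsequence satisfies $v_k \rightharpoonup v_\infty$ weakly in $H^{1/2}_r(\mathbb{R}^n)$ and strongly in $L^p(\mathbb{R}^n)$ by the compact embedding in Lemma \ref{fractional-embedding}. Strong $L^p$ convergence and the lower bound on $B(v_k)$ force $v_\infty \not\equiv 0$, while weak lower semicontinuity yields $A(v_\infty) \leq \liminf A(v_k)$ and $B(v_\infty) = \lim B(v_k)$, so $2A(v_\infty) \leq pB(v_\infty)$. If this inequality were strict, the unique $s_0 \in (0,1)$ with $s_0 v_\infty \in \mathcal{N}$ would produce
\[
I(s_0 v_\infty) = \tfrac{p-2}{p}\,s_0^2\,A(v_\infty) < \tfrac{p-2}{p}\liminf A(v_k) = \lim I(v_k) = M,
\]
contradicting minimality. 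Therefore $v_\infty \in \mathcal{N}$ and $I(v_\infty) \leq M$, so $v_\infty$ is a radial minimizer, and its Dirichlet extension via \eqref{extension-dirichlet} is the required $U \in \mathcal{N}_e$ with $I_e(U) = M_e$. The most delicate step is this closure of $\mathcal{N}$ under weak limits: because $A$ is only weakly lower semicontinuous while $B$ passes to the limit by compactness, the candidate minimizer could a priori sit strictly inside $\mathcal{N}$, and the rescaling argument above is what forbids this.
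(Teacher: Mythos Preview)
Your argument is correct and follows essentially the same route as the paper: symmetric rearrangement decreases the nonlocal kinetic energy $\int\sqrt{c^2|\xi|^2+m^2c^4}|\hat v|^2\,d\xi$ (the paper cites \cite{LL}), a rescaling $t\in(0,1]$ restores the Nehari condition without increasing the energy, and the compact embedding $H^{1/2}_r(\mathbb{R}^n)\hookrightarrow L^p(\mathbb{R}^n)$ from Lemma~\ref{fractional-embedding} closes the argument. The only organizational difference is that you first reduce to $M_e=M$ and work entirely on the trace side $H^{1/2}(\mathbb{R}^n)$, whereas the paper keeps the minimizing sequence in $H^1(\mathbb{R}^{n+1}_+)$ and passes to the boundary only for the symmetrization and compactness steps; your final contradiction showing $v_\infty\in\mathcal{N}$ (rather than merely $s_0 v_\infty\in\mathcal{N}$ with $s_0\le 1$) is a slight refinement the paper does not bother to record.
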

\begin{proof}
We claim that there exists a minimizing sequence $\{V_j\} \in \mathcal{N}_e$ of $I_e$
such that for any fixed $y > 0$, every $V_j$ is radially symmetric with respect to $x \in \mathbb{R}^n$.
To see this, take first an arbitrary minimizing sequence $\{V_j\}$.
Define $v_j(x) = V_j(x,0)$.
Let $\tilde{v}_j$ be the symmetric rearrangement of $v_j$ (see \cite{LL} for the definition).
It is shown in \cite{LL} that
\[
\int_{\mathbb{R}^n}(\sqrt{c^2 \xi^2 + m^2 c^4})|\hat{\tilde{v}}_j(\xi)|^2\,d\xi \leq
\int_{\mathbb{R}^n}(\sqrt{c^2 \xi^2 + m^2 c^4})|\hat{{v}}_j(\xi)|^2\,d\xi
\]
and for all $q \geq 1$,
\[
\int_{\mathbb{R}^n} |\tilde{v}_j|^q\,dx = \int_{\mathbb{R}^n} |v_j|^q\,dx.
\]
Let $\tilde{V}_j$ be the unique solution of \eqref{extension-dirichlet} with boundary data $\tilde{v}_j$.
Then one has from Lemma \ref{lem-trace-ineq}
\[
\begin{aligned}
\int_{\mathbb{R}^{n+1}_+}c^2|\nabla \tilde{V}_j|^2+m^2c^4\tilde{V}_j^2\,dxdy
& = \int_{\mathbb{R}^n}(\sqrt{c^2 \xi^2 + m^2 c^4})|\hat{\tilde{v}}_j(\xi)|^2\,d\xi \\
&\leq \int_{\mathbb{R}^n}(\sqrt{c^2 \xi^2 + m^2 c^4})|\hat{{v}}_j(\xi)|^2\,d\xi
\leq \int_{\mathbb{R}^{n+1}_+}c^2|\nabla V_j|^2+m^2c^4V_j^2\,dxdy.
\end{aligned}
\]
Thus, we have $J_e(\tilde{V}_j) \leq J_e(V_j) = 0$ and $I_e(\tilde{V}_j) \leq I_e(V_j)$.
As in the proof of Lemma \ref{nonmepty}, we can find $t_j > 0$ such that $J_e(t_j\tilde{V}_j) = 0$, and it can be checked easily that $t_j \in (0, 1]$ because $J_e(\tilde{V}_j) \leq 0$ means that
\[
\int_{\mathbb{R}^{n+1}_+}c^2|\nabla \tilde{V}_j|^2+m^2c^4\tilde{V}_j^2\,dxdy-c(mc^2-\mu)\int_{\mathbb{R}^n}\tilde{v}_j^2\,dx
\leq c\int_{\mathbb{R}^n}|\tilde{v}_j|^p\,dx.
\]
From the fact that $J_{e} (t_j \tilde{V}_j) =0$ and $t_j \in (0,1]$, we deduce that
\[
I_e(t_j\tilde{V}_j) = \left(\frac{1}{2}-\frac{1}{p}\right)\int_{\mathbb{R}^n}|t_j\tilde{v}_j|^p\,dx
\leq \left(\frac{1}{2}-\frac{1}{p}\right)\int_{\mathbb{R}^n}|\tilde{v}_j|^p\,dx
= \left(\frac{1}{2}-\frac{1}{p}\right)\int_{\mathbb{R}^n}|v_j|^p\,dx = I_e(V_j).
\]
Therefore $\{t_j\tilde{V}_j\}$ is a desired minimizing sequence of $I_e$ on $\mathcal{N}_e$.

Now, we just denote $\{t_j\tilde{V}_j\}$ by $\{V_j\}$.
Since
\[
\frac{1}{c}\left(\frac{1}{2}-\frac{1}{p}\right)
\left(\int_{\mathbb{R}^{n+1}_+}c^2|\nabla V_j|^2+m^2c^4V_j^2\,dxdy-(mc^2-\mu)\int_{\mathbb{R}^n}v_j^2\,dx\right)
 = I_e(V_j) < C,
\]
$\|V_j\|$ is uniformly bounded for $j$.
Then there exists some $V_0 \in H^1(\mathbb{R}^{n+1}_+)$
such that $V_j \rightharpoonup V_0$ weakly in $H^1(\mathbb{R}^{n+1}_+)$
and $V_j(x,0) \to V_0(x,0)$ strongly in $L^q(\mathbb{R}^n)$ for all $q \in (2, \frac{2n}{n-1})$ due to the Lemma \ref{fractional-embedding}.
From the weakly lower semi-continuity of the functional $I_e$ and $J_e$, we deduce
$I_e(V_0) \leq M_e$ and $J_e(V_0) \leq 0$.
We claim that $V_0 \not\equiv 0$. If not, $V_j(x,0) \to 0$ in $L^p(\mathbb{R}^n)$ but since $J_e(V_j) = 0$, by definition \eqref{eq-je} of $J_{e}$ we have
\begin{equation}
\begin{split}
 c \int_{\mathbb{R}^n} |V_j (x,0)|^p dx &=\int_{\mathbb{R}^{n+1}_{+}}c^2 |\nabla V_j  (x,y)|^2 + m^2 c^4 V_j  (x,y)^2 dx dy
\\
&\qquad + c (-mc^2 + \mu) \int_{\mathbb{R}^n} V_j (x,0)^2 dx
\\
& \geq  (1-\alpha)  \int_{\mathbb{R}^{n+1}_{+}}c^2 |\nabla V_j  (x,y)|^2 + m^2 c^4 V_j  (x,y)^2 dx dy
\\
& \geq C \| V_j (\cdot, 0)\|_{L^p (\mathbb{R}^n)}^2,
\end{split}
\end{equation}
where we applied the fractional Sobolev embedding in the second inequality. This shows that $\|V_j(\cdot,0)\|_{L^p(\mathbb{R}^n)}$ is bounded away from $0$. Therefore $V_0 \not\equiv 0$.
Then as above, there is $t_0 \in (0, 1]$ such that
$I_e(t_0V_0) \leq M_e$ and $J_e(t_0V_0) = 0$.
This proves the proposition.
\end{proof}

Now, we are ready to complete the proof of Theorem \ref{existence}.
Let $U$ be the minimizer obtained in Proposition \ref{minimizer}.
Since $U \in \mathcal{N}_e$,
\[
\begin{aligned}
J_e'(U)U &=  \frac{2}{c}\left(\int_{\mathbb{R}^{n+1}_+}c^2|\nabla U|^2+m^2c^4U^2\,dxdy
-c(mc^2-\mu)\int_{\mathbb{R}^n}U (x,0)^2\,dx\right) - p\int_{\mathbb{R}^n} |U(x,0)|^p\,dx \\
&= (2-p)\int_{\mathbb{R}^n} |U(x,0)|^p\,dx \neq 0
\end{aligned}
\]
so $J_e'(U) \neq 0$. Then the Lagrange multiplier rule applies to see that for some $\lambda \in \R$,
\begin{equation}\label{LM}
I_e'(U) = \lambda J_e'(U).
\end{equation}
By testing $U$ to \eqref{LM}, we immediately see $\lambda = 0$ so that $U$ is a nontrivial solution of \eqref{main-eq}.
Moreover, $U$ is indeed a ground state because every critical point $V$ of $I_e$ belongs to $\mathcal{N}_e$.
This proves Theorem \ref{existence}.

\section{Sign definiteness and symmetry of ground states}
This section is devoted to prove Theorem \ref{qp}.
We first prove ground state solutions have one sign.
\begin{prop}
Every ground state solution $u$ of \eqref{main-eq} has one sign.
In other words, it is either positive everywhere or negative everywhere.
\end{prop}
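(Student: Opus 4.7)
The plan is a standard Nehari-manifold argument via the extension problem: assume for contradiction that a ground state $u$ changes sign, and construct a Nehari competitor with strictly smaller $I_e$-value, contradicting $M_e = \inf_{\mathcal{N}_e} I_e$. Let $U$ be the extension of $u$ furnished by Proposition \ref{equivalence}, a minimizer of $I_e$ on $\mathcal{N}_e$. By Stampacchia's chain rule, $\nabla|U| = \operatorname{sgn}(U)\nabla U$ almost everywhere, so $|U|$ has the same weighted Dirichlet energy as $U$ while its trace is $|u|$.

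Let $W \in H^1(\mathbb{R}^{n+1}_+)$ be the unique solution of \eqref{extension-dirichlet} with boundary data $|u|$. Applying Lemma \ref{lem-trace-ineq} to $V = |U|$ yields
\begin{equation*}
\int_{\mathbb{R}^{n+1}_+} c^2|\nabla W|^2 + m^2c^4 W^2\,dxdy \,\leq\, \int_{\mathbb{R}^{n+1}_+} c^2|\nabla U|^2 + m^2c^4 U^2\,dxdy,
\end{equation*}
and I claim this inequality is strict. Otherwise $|U| = W$ by uniqueness in Lemma \ref{lem-trace-ineq}, so $|U|$ solves $(-c^2\Delta + m^2c^4)|U| = 0$ in $\mathbb{R}^{n+1}_+$. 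By linearity, both $2U_+ = |U| + U$ and $2U_- = |U| - U$ are then non-negative solutions of the same equation. The strong maximum principle forces each of $U_\pm$ to be either identically zero or strictly positive throughout the connected set $\mathbb{R}^{n+1}_+$; but $U_+\cdot U_- \equiv 0$ pointwise, so one of them must vanish identically, forcing $u$ to have constant sign and contradicting the assumption.

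The boundary contributions being identical, the strict inequality gives $\|W\|^2 < \|U\|^2$ while $\int_{\mathbb{R}^n}|W(x,0)|^p\,dx = \int_{\mathbb{R}^n}|U(x,0)|^p\,dx$. The map $t \mapsto I_e(tV) = \tfrac{t^2}{2c}\|V\|^2 - \tfrac{t^p}{p}\int_{\mathbb{R}^n}|V(x,0)|^p\,dx$ attains its positive maximum over $t > 0$ at the unique $t_V$ putting $t_V V \in \mathcal{N}_e$, and this maximum value is strictly increasing in $\|V\|^2$ when the boundary $L^p$-norm is held fixed. Therefore
\begin{equation*}
M_e \,\leq\, \max_{t>0} I_e(tW) \,<\, \max_{t>0} I_e(tU) \,=\, I_e(U) \,=\, M_e,
\end{equation*}
a contradiction. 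Hence $u$ does not change sign; say $u \geq 0$. To promote this to $u > 0$ everywhere, suppose $u(x_0) = 0$ for some $x_0$. Then $U > 0$ in $\mathbb{R}^{n+1}_+$ by the strong maximum principle, while $U(x_0,0) = 0$, so Hopf's lemma yields $\partial_y U(x_0,0) > 0$; however, the Neumann condition \eqref{extension-neumann} at $x_0$ reads $-c\,\partial_y U(x_0,0) = 0$, a contradiction. The main obstacle is the strict inequality step, which hinges on the uniqueness in Lemma \ref{lem-trace-ineq} combined with the strong maximum principle applied to the non-negative solutions $U_\pm$.
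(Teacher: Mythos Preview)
Your proof is correct, but it takes a somewhat longer route than the paper's. The paper observes directly that $|U|$ has the \emph{same} weighted Dirichlet energy as $U$ (since $|\nabla U|^2 = |\nabla|U||^2$ a.e.), so $|U|\in\mathcal{N}_e$ with $I_e(|U|)=M_e$; thus $|U|$ is itself a Nehari minimizer and, by the Lagrange-multiplier argument of Section~3, a solution of the Neumann problem \eqref{extension-neumann}. Regularity then puts $|U|\in C^{2,\alpha}(\overline{\mathbb{R}^{n+1}_+})$, and Hopf's lemma applied to $|U|$ at a boundary zero finishes the argument in one stroke. You instead pass to the harmonic extension $W$ of $|u|$ and argue a strict energy drop when $u$ changes sign, via the pleasant observation that equality would force both $U_+$ and $U_-$ to solve the interior equation and hence one of them to vanish by the strong maximum principle. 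This is a genuinely different mechanism for producing the contradiction: a strict variational inequality rather than recognizing $|U|$ as a solution. Both are valid; the paper's route is shorter because it sidesteps the strictness analysis entirely, while your route has the minor advantage of not invoking the Lagrange-multiplier step or full boundary regularity for $|U|$ until the very last ($u\ge 0 \Rightarrow u>0$) stage, which you handle the same way as the paper.
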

\begin{proof}
Let $U$ be a unique solution of \eqref{extension-dirichlet} with boundary data $u$.
Then, by Proposition \ref{equivalence}, $U \in \mathcal{N}_e$ and $I_e(U) = M_e$.
Note that $|\nabla U|^2 = |\nabla|U||^2$. To see this, define $U_+ := \max\{U,\, 0\}$
and $U_- := \max\{-U,\,0\}$ so that $U = U_+-U_-$ and $|U| = U_+ + U_-$
and use the fact that $\nabla U_+\cdot\nabla U_- = 0$.
This shows $|U| \in \mathcal{N}_e$ and $I_e(|U|) = M_e$, which means that $|U|$ is also a ground state
solution of \eqref{extension-neumann}.
Applying the standard elliptic regularity theory (see \cite{GT} and \cite{CN2}),
we see $|U|$ is a classical solution of \eqref{extension-neumann} and
$|U| \in C^{2,\alpha}(\overline{\mathbb{R}^{n+1}_+})$.

Now, arguing indirectly, suppose that $u$ changes sign. Then there exists $x_0 \in \mathbb{R}^n$ such that
$u(x_0) = 0$ and consequently, $|U|$ attains it global minimum on $\overline{\mathbb{R}^{n+1}_+}$ at $(x_0,0)$.
By using the Hopf maximum principle, we see $(\partial|U|/\partial\nu)(x_0,0) < 0$.
But this makes a contradiction because \eqref{extension-neumann} says
\[
-c\frac{\partial}{\partial y}|U|(x_0,0) = (mc^2-\mu)|u|(x_0) + |u|^{p-1}(x_0) = 0.
\]
This completes the proof.
\end{proof}

Without loss of generality, we may assume that every ground state solution of \eqref{main-eq}
is positive everywhere since $u$ is a solution of \eqref{main-eq} if and only if $-u$ is a solution of of \eqref{main-eq}.
We complete the proof of Theorem \ref{qp} by proving the following proposition.
\begin{prop}\label{symmetry}
Every positive solution $u \in H^{1/2}(\mathbb{R}^n)$ of \eqref{main-eq} is radially symmetric on $\mathbb{R}^n$
with respect to some point $x_0 \in \mathbb{R}^n$.
\end{prop}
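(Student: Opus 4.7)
My plan is to prove Proposition \ref{symmetry} via the method of moving planes applied to the extension problem \eqref{extension-neumann}. Let $U \in H^1(\mathbb{R}^{n+1}_+)$ be the unique extension of $u$ from \eqref{extension-dirichlet}; since $u > 0$, the strong maximum principle for $(-c^2\Delta + m^2 c^4) U = 0$ with $U(\cdot,0) = u$ yields $U > 0$ in $\overline{\mathbb{R}^{n+1}_+}$. A preliminary step is to establish that $u(x) \to 0$ as $|x| \to \infty$. I would obtain this by a bootstrap upgrading $u$ to $L^\infty(\mathbb{R}^n)$ (using the subcriticality of $p$ and Lemma \ref{fractional-embedding}), followed by the integral representation of $u$ through the Bessel-type kernel of $(\sqrt{-c^2\Delta + m^2c^4} + \mu)^{-1}$, which decays exponentially; the decay transfers to $U$ through the Poisson-type kernel of \eqref{extension-dirichlet}.

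For $\lambda \in \mathbb{R}$, I set $\Sigma_\lambda := \{(x,y) : x_1 < \lambda,\, y > 0\}$, $T_\lambda := \{x_1 = \lambda\}$, $x^\lambda := (2\lambda - x_1, x_2, \dots, x_n)$, $U_\lambda(x,y) := U(x^\lambda, y)$, $u_\lambda(x) := u(x^\lambda)$, and $w_\lambda := U_\lambda - U$. Then $w_\lambda$ satisfies $(-c^2\Delta + m^2 c^4) w_\lambda = 0$ in $\Sigma_\lambda$, $w_\lambda = 0$ on $T_\lambda$, and by the mean value theorem applied to the boundary nonlinearity,
\begin{equation*}
-c\, \partial_y w_\lambda(x,0) = b_\lambda(x)\, w_\lambda(x,0), \qquad b_\lambda(x) := (mc^2 - \mu) + (p-1)\xi_\lambda(x)^{p-2},
\end{equation*}
with $\xi_\lambda(x)$ lying between $u(x)$ and $u_\lambda(x)$. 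Testing the interior equation against $w_\lambda^- := \max\{-w_\lambda,0\}$, which vanishes on $T_\lambda$, and integrating by parts using the Neumann condition gives
\begin{equation*}
\int_{\Sigma_\lambda} c^2 |\nabla w_\lambda^-|^2 + m^2 c^4 (w_\lambda^-)^2 \, dx\, dy \;=\; c \int_{\Sigma_\lambda \cap \{y=0\}} b_\lambda(x) (w_\lambda^-)^2\, dx.
\end{equation*}
Combined with \eqref{ineq-1} applied to $w_\lambda^-$, this rearranges to
\begin{equation*}
\int_{\Sigma_\lambda \cap \{y=0\}} \bigl[\mu - (p-1)\xi_\lambda(x)^{p-2}\bigr](w_\lambda^-)^2\, dx \;\geq\; 0.
\end{equation*}

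On the support of $w_\lambda^-$ we have $u > u_\lambda$, hence $\xi_\lambda \leq u$; since that support lies in $\Sigma_\lambda$ where $x_1 < \lambda$, the decay of $u$ makes the bracket strictly positive once $\lambda \ll 0$, forcing $w_\lambda^- \equiv 0$ and thus $w_\lambda \geq 0$ on $\Sigma_\lambda$. Setting $\lambda_0 := \sup\{\lambda : w_\mu \geq 0 \text{ on } \Sigma_\mu \text{ for all } \mu \leq \lambda\}$, the decay of $u$ also forces $\lambda_0 < \infty$. To conclude $w_{\lambda_0} \equiv 0$, I would argue by contradiction: the strong maximum principle in the interior together with Hopf's lemma on $T_{\lambda_0}$ give $w_{\lambda_0} > 0$ strictly in $\Sigma_{\lambda_0}$, and then combining continuity of $w_\lambda$ in $\lambda$ on compact sets with the decay of $u$ on the complement allows one to rerun the integral estimate above for $\lambda = \lambda_0 + \varepsilon$ with $\varepsilon > 0$ small, contradicting the maximality of $\lambda_0$. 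Hence $u$ is symmetric about $T_{\lambda_0}$, and repeating the argument in every direction of $\mathbb{R}^n$ yields radial symmetry around some point $x_0 \in \mathbb{R}^n$.

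The step I expect to be the main obstacle is this last "pushing past $\lambda_0$": one must split the positivity set of $w_{\lambda_0+\varepsilon}^-$ into a compact piece (where strict positivity of $w_{\lambda_0}$ combined with continuity prevents any negativity) and an exterior piece (where decay of $u$ suffices). The quantitative trace bound \eqref{ineq-1} supplies exactly the sharp threshold $b_\lambda < mc^2$, and it is the interaction between this bound and the decay of $u$ that will require the most care.
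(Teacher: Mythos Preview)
Your route is the paper's: moving planes on the extension $U$, using the trace inequality \eqref{ineq-1} to start and the strong maximum principle plus Hopf to continue; your choice to let $\lambda$ range over all of $\mathbb{R}$ and argue $\lambda_0<\infty$ directly from decay even spares you the paper's separate treatment of the case $\nu=0$. Three points need tightening.

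First, the rearranged integral inequality should read
\[
\int_{\{x_1<\lambda\}}\bigl[\mu-(p-1)\xi_\lambda^{p-2}\bigr](w_\lambda^-)^2\,dx\;\le\;0,
\]
not $\ge 0$; it is this sign, together with positivity of the bracket, that forces $w_\lambda^-\equiv 0$.

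Second, at the critical position the Hopf lemma must be applied on the boundary portion $\{y=0\}$ rather than on $T_{\lambda_0}$: it is what gives $w_{\lambda_0}(\cdot,0)>0$ strictly on $\{x_1<\lambda_0\}$, and this boundary positivity (not interior positivity) is what the compact-set step actually feeds on.

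Third, ``strict positivity prevents any negativity'' will not quite do on the compact piece, since $w_{\lambda_0}$ vanishes along $T_{\lambda_0}$ and so $w_{\lambda_0+\varepsilon}^-$ can be nonzero in the thin strip $\{\lambda_0-\delta<x_1<\lambda_0+\varepsilon\}$. What the paper does---and what you will need---is observe that the support of $w_{\lambda_0+\varepsilon}^-(\cdot,0)$ inside any fixed ball has measure $o(1)$ as $\varepsilon\to 0$, and then absorb that contribution into the left-hand side via H\"older and the trace Sobolev inequality $\|v\|_{L^{2n/(n-1)}(\mathbb{R}^n)}\le C\|V\|_{H^1(\mathbb{R}^{n+1}_+)}$.
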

%
\begin{proof}
Let $U$ be a unique solution of \eqref{extension-dirichlet} with boundary data $u$,
which is consequently a classical solution of \eqref{extension-neumann} and belongs to $C^2(\overline{\mathbb{R}^{n+1}_+})$. The strong maximum principle says that $U$ is positive on $\overline{\mathbb{R}^{n+1}_+}$.
By the standard elliptic regularity theory (see \cite{GT} and \cite{CN2}), we know
\[
\lim_{|x| + y \to \infty}|U(x,y)| = 0.
\]
Take $\lambda > 0$. Define
\[
M_\lambda :=\{ (\lambda, x_2, \dots, x_n, y) \in \mathbb{R}^{n+1}_+ ~|~ x_i \in \R\,(i=2,\dots, n),\, y \geq 0\}
\]
and
\[
R_\lambda := \{(x_1, \dots, x_n, y) \in \mathbb{R}^{n+1}_+ ~|~ x_1 > \lambda,\, x_i \in \R\, (i=2,\dots, n),\, y \geq 0 \}.
\]
Let $U_\lambda(x_1,\dots, x_n, y) := U(2\lambda -x_1, x_2, \dots, x_n, y)$ be a reflection of $U$ with respect to  $M_\lambda$.
Then $W_\lambda := U_\lambda - U$ satisfies
\begin{eqnarray}\label{eq-compare}
(-c^2\Delta_{x,y} +m^2c^4)W_\lambda(x,y) &=& 0 \qquad\textrm{in } \mathbb{R}^{n+1}_+  \\ \nonumber
-c\frac{\partial W_\lambda}{\partial y}(x,0) & = & (C_\lambda(x)+mc^2-\mu)w_\lambda(x)  \qquad\textrm{in } \mathbb{R}^n,
\\ \nonumber
\end{eqnarray}
where $u_\lambda(x) := U_\lambda(x,0),\, w_\lambda(x) := W_\lambda(x,0)$ and
\[
C_\lambda(x) = \frac{u_\lambda^{p-1}(x) - u^{p-1}(x)}{u_\lambda(x) - u(x)}.
\]
Let $W_\lambda^- := \min\{0,W_\lambda\}$ be the negative part of $W_\lambda$. Note that as $\lambda \to \infty$, $C_\lambda(x) \to 0$ uniformly for $x$ with $W_{\lambda}^{-}(x)\neq 0$  due to the fact
$\lim_{|x| + y \to \infty}|U(x,y)| = 0$ and $0 \leq U_{\lambda} < U$ whenever $W_{\lambda}^{-} \neq 0$. Then, testing $W_\lambda^-$ to \eqref{eq-compare} and applying inequality \eqref{ineq-1}, we get
for sufficiently large $\lambda > 0$,
\begin{equation}
\begin{aligned}
&\int_{R_\lambda}c^2|\nabla W_\lambda^-|^2 +m^2c^4(W_\lambda^-)^2\,dxdy
 = \int_{\{x_1 > \lambda\}}c(C_\lambda(x)+mc^2-\mu)(w_\lambda^-)^2\,dx \\
& \leq \frac{c}{mc}(mc^2-\frac\mu2)\int_{R_\lambda}|\nabla W_\lambda^-|^2\,dxdy
+mc^2(mc^2-\frac\mu2)\int_{R_\lambda}(W_\lambda^-)^2\,dxdy,
\end{aligned}
\end{equation}
which tells us that $W_\lambda^- \equiv 0$ on $R_\lambda$.
Thus we conclude that if $\lambda > 0$ is sufficiently large,
\[
W_\lambda(x,y) \geq 0 \quad \text{ on } R_\lambda.
\]
Now, define
\[
\nu := \inf\{s > 0 ~|~ W_\lambda \geq 0 \text{ on } R_\lambda \text{ for every } \lambda \geq s\}.
\]
\\
The case $\nu > 0$ : We claim that $W_\nu \equiv 0$. To the contrary, suppose not.
Then we can see that $W_\nu > 0$ on the set
\[
R'_\nu : = \{(x_1, \dots, x_n, y) ~|~ x_1 > \nu,\, x_i \in \R(i = 2,\dots,n),\, y > 0 \}
\]
from the strong maximum principle the fact that $W_\nu \geq 0$ on $R_\nu$ holds by continuity. We also have $w_{\nu}(x) \geq 0$ on the set $\{x \in \mathbb{R}^n ~|~x_1 \geq \nu\}$ by continuity. Furthermore, we have $w_\nu(x) > 0$ on the set $\{ x \in \mathbb{R}^n ~|~ x_1 > \nu\}$.
Otherwise there exists $x_0 \in \mathbb{R}^n$ such that $(x_0)_1 > \nu$ and $w_\nu(x_0) = 0$.
Then the Hopf maximum principle says $-\frac{\partial}{\partial y}W_\nu(x_0,0) < 0$
but this contradict with \eqref{eq-compare} since $w_\nu(x_0) = 0$.
Take $\lambda_j < \nu$ such that $\lambda_j \to \nu$ as $j \to \infty$.
Let $r_0$ be a positive number such that $|C_\nu(x)| \leq \mu/4$ for every $|x| > r_0$.
Since $\|U_{\lambda_j}\|_{C^1(\mathbb{R}^{n+1}_+)}$ is uniformly bounded,
$D := \|C_{\lambda_j}\|_{L^\infty(\mathbb{R}^n)} < \infty$
and $|C_{\lambda_j}(x)| \leq \mu/2$ for every $|x| > r_0$ and $j \in \mathbb{N}$.
Let $B_{r_0}(p_j)$ be the $n$ dimensional Euclidean ball with center $p_j$ and radius $r_0$
where $p_j = (\lambda_j,0\dots,0)$.
As above, we also have
\begin{equation}\label{eq-compare2}
\begin{aligned}
\int_{R_{\lambda_j}}c^2|\nabla W_{\lambda_j}^-|^2 +m^2c^4(W_{\lambda_j}^-)^2\,dxdy
&\leq \int_{\{x_1 > {\lambda_j}\}}c(C_{\lambda_j}(x)+mc^2-\mu)(w_{\lambda_j}^-)^2\,dx \\
& \leq c(D+mc^2+\mu)\int_{\{x_1 > {\lambda_j}\} \cap B_{r_0}(p_j)}(w_{\lambda_j}^-)^2\,dx \\
&\qquad+c(mc^2-\frac12\mu)\int_{\{x_1 > {\lambda_j}\} \setminus B_{r_0}(p_j)}(w_{\lambda_j}^-)^2\,dx.
\end{aligned}
\end{equation}
As before, the second term in right-hand side of \eqref{eq-compare2} is absorbed in left-hand side.
Since $w_\nu(x) > 0$ on the set $\{ x \in \mathbb{R}^n ~|~ x_1 > \nu\}$, the measure of set $E_j$,
support of $w_{\lambda_j}^-$ on $B_{r_0}(p_j)$, goes to $0$ as $j \to \infty$.
Then using H\"older and Sobolev inequality, we see
\[
\begin{aligned}
\int_{\{x_1 > {\lambda_j}\} \cap B_{r_0}(p_j)}(w_{\lambda_j}^-)^2\,dx &=
\int_{\{x_1 > {\lambda_j}\}}\chi_{E_j}(w_{\lambda_j}^-)^2\,dx
\leq \|\chi_{E_j}\|_{L^n}\|w_{\lambda_j}^-\|_{L^{\frac{2n}{n-1}}}^2 \\
&\leq o(1)\int_{R_{\lambda_j}}|\nabla W_{\lambda_j}^-|^2\,dxdy.
\end{aligned}
\]
Therefore if $j$ is large, we see $W_{\lambda_j} \geq 0$ on $R_{\lambda_j}$ \
but this contradicts with the minimality of $\nu$.
Thus we finally conclude that $W_\nu \equiv 0$ on $R_\nu$ and get the symmetry in the $x_1$ direction. \\

\noindent The case $\nu = 0$ : We repeat the above argument for $\lambda < 0$ and $W_\lambda := U_\lambda -U$ defined on
\[
L_\lambda := \{(x_1, \dots, x_n, y) \in \mathbb{R}^{n+1}_+ ~|~ x_1 < \lambda,\, x_i \in \R(i=2,\dots, n),\, y \geq 0 \}.
\]
Then we see that $W_\lambda \geq 0$ for sufficiently large $|\lambda|$.
Define
\[
\nu' := \sup\{s < 0 ~|~ W_\lambda \geq 0 \text{ on } L_\lambda \text{ for every } \lambda \leq s\}.
\]
If $\nu' < 0$, we get the symmetry as above. If $\nu' = 0$, we get from $\nu = 0$ that
\[
U(-x_1, x_2, \dots, x_n, y) \geq U(x_1, \dots, x_n, y) \quad \text{ on } R^{n+1}_+.
\]
Consequently, by replacing $x_1$ with $-x_1$ we see the symmetry,
\[
U(-x_1, x_2, \dots, x_n, y) = U(x_1, \dots, x_n, y) \quad \text{ on } R^{n+1}_+.
\]
To complete the entire proof, we repeat the above process for every other directions $x_i$. This complete the proof.
\end{proof}

\section{Uniform estimates of the ground state solutions in $H^{1}(\mathbb{R}^n)$}
In what follows, we shall denote by $u_c$ a radially symmetric positive ground state solution to \eqref{main-eq} for each $c \geq 1$.
The aim of this section is to show that $\{u_c\}_{c >1}$ found in Section 3 are uniformly upper and lower bounded in $L^{p}(\mathbb{R}^n)$
and eventually upper bounded in $H^1(\mathbb{R}^n)$.
This will be essentially used when we obtain the nonrelativstic limit in the next section.
To clearly show the dependence of $c$,
we add a subscript $c$ to the functionals $I_e$, $J_e$ and the space $\mathcal{N}_e$
introduced in Section 2 and Section 3 so that
\begin{equation}
\begin{split}
I_{e, c}(U) &= \frac{1}{2c}\int_{\mathbb{R}^{n+1}_+}c^2|\nabla U(x,y)|^2+m^2c^4U(x,y)^2\,dxdy
\\
&\quad \qquad +\frac{(-mc^2+\mu)}{2}\int_{\mathbb{R}^n}U(x,0)^2\,dx -\frac{1}{p}\int_{\mathbb{R}^n}|U(x,0)|^p\,dx,
\\
J_{e,c}(U) &=\frac{1}{c}\int_{\mathbb{R}^{n+1}_{+}}c^2 |\nabla U (x,y)|^2 + m^2 c^4 U (x,y)^2 dx dy
\\
&\qquad\quad +  (-mc^2 + \mu) \int_{\mathbb{R}^n} U(x,0)^2 dx -  \int_{\mathbb{R}^n} |U(x,0)|^p dx,
\\
\mathcal{N}_{e,c} &= \{V \in H^1(\mathbb{R}^{n+1}_+) ~|~ J_{e,c}(V) = 0,\, V \not\equiv 0 \}.
\end{split}
\end{equation}
Likewise,
\[I_c := I = \frac{1}{2} \int_{\mathbb{R}^n}(\sqrt{c^2 \xi^2 + m^2 c^4}-mc^2+\mu)|\hat{u} (\xi)|^2\,d\xi
-\frac{1}{p}\int_{\mathbb{R}^n} |u|^p\,dx.
\]

\begin{lem}\label{lem-iuc}
We have  $\sup_{c \geq 1} I_c(u_c) < \infty$.
\end{lem}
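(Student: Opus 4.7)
The plan is to use the Nehari-manifold characterization of $u_c$ established in Section 3 together with a single $c$-independent test function. Since $u_c$ minimizes $I_c$ over the Nehari set $\mathcal{N}_c=\{v\in H^{1/2}(\mathbb{R}^n)\setminus\{0\} : I_c'(v)v=0\}$, it suffices to exhibit, for each $c\geq 1$, one element $w_c\in\mathcal{N}_c$ whose energy $I_c(w_c)$ stays bounded as $c\to\infty$; then $I_c(u_c)\leq I_c(w_c)$ by minimality.

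Fix once and for all a nonzero radial $\varphi\in C_c^\infty(\mathbb{R}^n)$ and take $w_c:=t_c\varphi$, where $t_c>0$ is the unique scalar making $t_c\varphi$ Nehari, determined by
\[
t_c^{p-2}=\frac{K_c(\varphi)}{\|\varphi\|_{L^p}^p},\qquad K_c(\varphi):=\int_{\mathbb{R}^n}\bigl(\sqrt{c^2|\xi|^2+m^2c^4}-mc^2+\mu\bigr)|\hat\varphi(\xi)|^2\,d\xi.
\]
On the Nehari manifold one has the standard identity $I_c(w_c)=(\tfrac12-\tfrac1p)\|w_c\|_{L^p}^p$, and substituting the formula for $t_c$ expresses this energy as the explicit monotonically increasing function
\[
I_c(w_c)=\Bigl(\frac12-\frac1p\Bigr)\frac{K_c(\varphi)^{p/(p-2)}}{\|\varphi\|_{L^p}^{2p/(p-2)}}
\]
of $K_c(\varphi)$ alone. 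The whole problem therefore collapses to a uniform upper bound on $K_c(\varphi)$.

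The required bound is the rigorous counterpart of the formal expansion \eqref{eq-formal}: applying the elementary inequality $\sqrt{1+s}\leq 1+s/2$ with $s=|\xi|^2/(m^2c^2)$ yields the one-sided pointwise estimate
\[
\sqrt{c^2|\xi|^2+m^2c^4}-mc^2\leq \frac{|\xi|^2}{2m}\qquad\text{for every }\xi\in\mathbb{R}^n\text{ and every }c\geq 1,
\]
so that $K_c(\varphi)\leq \int_{\mathbb{R}^n}(|\xi|^2/(2m)+\mu)|\hat\varphi|^2\,d\xi<\infty$ independently of $c$. Combined with the display above this gives $\sup_{c\geq 1}I_c(u_c)\leq C(\varphi)<\infty$. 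No genuine obstacle arises: the variational structure from Section 3 supplies the Nehari upper bound, and the one-sided pointwise inequality removes all $c$-dependence. The only facts that require comment are the monotonicity of $K\mapsto K^{p/(p-2)}$ on $(0,\infty)$, which is immediate because $p>2$, and the finiteness of the limiting integral, which is clear since $\varphi\in C_c^\infty\subset H^1$.
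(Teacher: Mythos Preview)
Your proof is correct and follows essentially the same approach as the paper. The only cosmetic difference is that the paper routes the computation through the extended problem on $\mathbb{R}^{n+1}_+$ (rewriting $I_{e,c}(U_c)$ as a Rayleigh-type quotient minimized over $H^1(\mathbb{R}^{n+1}_+)$ and then reducing to $H^{1/2}(\mathbb{R}^n)$ via Lemma~\ref{lem-trace-ineq}), whereas you work directly with $I_c$ on $H^{1/2}(\mathbb{R}^n)$; the three essential ingredients---Nehari minimality, a fixed $c$-independent test function, and the pointwise symbol bound $\sqrt{c^2|\xi|^2+m^2c^4}-mc^2\le |\xi|^2/(2m)$---are identical in both arguments.
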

\begin{proof}
Recall that we obtained the solution $u_c$ in Section 3 by finding a minimizer $U_c$ of $I_{e,c}$ on the Nehari manifold $\mathcal{N}_{e,c}$. In fact, $u_c(x) = U_c(x,0)$ and $I_c(u_c) = I_{e,c}(U_c)$.
Note that any function $V \in \mathcal{N}_{e,c}$ satisfies
\begin{equation}
 \int_{\mathbb{R}^n} |V(x,0)|^p dx =\frac{1}{c} \int_{\mathbb{R}^{n+1}_{+}} c^2 |\nabla V(x,y)|^2 + m^2 c^4 V(x,y)^2 dx dy +  (-mc^2 +\mu) \int_{\mathbb{R}^n}V(x,0)^2 dx.
\end{equation}
Using this property, we have
\begin{equation}\label{eq-ju-1}
\begin{split}
&\left(\frac{1}{2} -\frac{1}{p}\right)^{-1} I_{e,c} (U_{c})
=\min_{V \in \mathcal{N}_{e,c}} \left(\frac{1}{2} -\frac{1}{p}\right)^{-1}  I_{e,c} (V)
= \min_{V \in \mathcal{N}_{e,c}}  \int_{\mathbb{R}^n} |V(x,0)|^p dx
\\
&= \min_{V \in \mathcal{N}_{e,c}} \frac{ \left[ \frac{1}{c} \int_{\mathbb{R}^{n+1}_{+}} c^2 |\nabla V(x,y)|^2 + m^2 c^4 V(x,y)^2 dx dy +  (-mc^2 +\mu) \int_{\mathbb{R}^n}V(x,0)^2 dx\right]^{\frac{p}{p-2}}}{ \left[  \int_{\mathbb{R}^n} |V(x,0)|^p dx\right]^{\frac{2}{p-2}}}
\\
&= \min_{V \in \mathcal{N}_{e,c}}\frac{ \left[ \frac{1}{c} \int_{\mathbb{R}^{n+1}_{+}} c^2 |\nabla (tV)(x,y)|^2 + m^2 c^4 (tV)(x,y)^2 dx dy +  (-mc^2 +\mu) \int_{\mathbb{R}^n}(tV)(x,0)^2 dx\right]^{\frac{p}{p-2}}}{ \left[  \int_{\mathbb{R}^n} |(tV)(x,0)|^p dx\right]^{\frac{2}{p-2}}}
\\
&= \min_{V \in H^1 (\mathbb{R}^{n+1}_{+})}\frac{ \left[  \frac{1}{c}\int_{\mathbb{R}^{n+1}_{+}} c^2 |\nabla V(x,y)|^2 + m^2 c^4 V(x,y)^2 dx dy +  (-mc^2 +\mu) \int_{\mathbb{R}^n}V(x,0)^2 dx\right]^{\frac{p}{p-2}}}{ \left[ \int_{\mathbb{R}^n} |V(x,0)|^p dx\right]^{\frac{2}{p-2}}}
\\
&=\min_{v \in H^{1/2} (\mathbb{R}^n)}\frac{ \left[  \int_{\mathbb{R}^n}(c^2 |\xi|^2 + m^2 c^4)^{\frac{1}{2}} |\hat{v}|^2 d \xi + (-mc^2 +\mu) \int_{\mathbb{R}^n}v(x)^2 dx\right]^{\frac{p}{p-2}}}{ \left[ \int_{\mathbb{R}^n} |v(x)|^p dx\right]^{\frac{2}{p-2}}},
\end{split}
\end{equation}
where we used Lemma \ref{lem-trace-ineq} in the last equality. Take any nonzero function $\phi \in \mathcal{S}(\mathbb{R}^n)$. Then we see that
\begin{equation}
\begin{split}
\int_{\mathbb{R}^n} \left(\sqrt{c^2 |\xi|^2 + m^2 c^4}- mc^2 \right) |\hat{\phi}(\xi)|^2 d\xi &= \int_{\mathbb{R}^n} \frac{|\xi|^2}{\sqrt{ \frac{|\xi|^2}{c^2}+m^2} + m} |\hat{\phi}(\xi)|^2 d\xi
\\
& \leq \frac{1}{2m}\int_{\mathbb{R}^n}  |\xi|^2 |\hat{\phi}(\xi)|^2 d\xi.
\end{split}
\end{equation}
Therefore,
\begin{equation}
\sup_{c > 1}~\frac{ \left[  \int_{\mathbb{R}^n}(c^2 |\xi|^2 + m^2 c^4)^{\frac{1}{2}} |\hat{\phi}|^2 d \xi + (-mc^2 +\mu) \int_{\mathbb{R}^n}\phi(x)^2 dx\right]^{\frac{p}{p-2}}}{ \left[ \int_{\mathbb{R}^n} |\phi(x)|^p dx\right]^{\frac{2}{p-2}}} < \infty.
\end{equation}
Combining this with \eqref{eq-ju-1} we conclude that $\sup_{c > 1} I_{e, c}(u_c) < \infty$. The lemma is proved.
\end{proof}
\begin{lem}\label{lem-upc}
For the ground state solutions $\{u_c\}_{c >1}$ to \eqref{main-eq}
 there exists a constant $C = C(m, \mu, n)$ such that
\begin{equation}
\frac{1}{C} \leq \int_{\mathbb{R}^n} u_c^{p} (x) dx \leq C.
\end{equation}
In addition, we also have
$\sup_{c>1}  \|u_c\|_{H^{1/2}(\mathbb{R}^n)} < \infty.$
\end{lem}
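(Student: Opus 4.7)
The plan is to combine Lemma \ref{lem-iuc} with the Nehari identity satisfied by $u_c$ and a $c$-uniform lower bound on the kinetic symbol, and then close the loop using the fractional Sobolev embedding from Lemma \ref{fractional-embedding}.

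First, since $u_c$ is a critical point of $I_c$, testing $I_c'(u_c)u_c=0$ yields the Nehari identity
\[
\int_{\mathbb{R}^n}\bigl(\sqrt{c^2|\xi|^2+m^2c^4}-mc^2+\mu\bigr)|\hat u_c(\xi)|^2\,d\xi = \int_{\mathbb{R}^n}u_c^p\,dx,
\]
which also gives $I_c(u_c)=(\tfrac12-\tfrac1p)\int u_c^p$. Combining with Lemma \ref{lem-iuc} immediately produces the upper bound $\int u_c^p\,dx\le C$.

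The key technical step is a $c$-independent lower bound: I claim that for $c\ge 1$,
\[
\sqrt{c^2|\xi|^2+m^2c^4}-mc^2\;\ge\;\sqrt{|\xi|^2+m^2}-m.
\]
The argument is to differentiate $g(c):=\sqrt{c^2|\xi|^2+m^2c^4}-mc^2$ in $c$; the sign of $g'(c)$ reduces, after clearing positive factors, to comparing $(|\xi|^2+2m^2c^2)^2$ with $4m^2(c^2|\xi|^2+m^2c^4)$, and the difference of these two quantities is exactly $|\xi|^4\ge 0$, so $g$ is increasing on $[1,\infty)$. Adding $\mu$ and splitting into the regimes $|\xi|$ small and $|\xi|$ large, one finds a constant $c_0=c_0(m,\mu)>0$ with
\[
\sqrt{|\xi|^2+m^2}-m+\mu\;\ge\;c_0\bigl(1+|\xi|^2\bigr)^{1/2}\quad\text{for all }\xi\in\mathbb{R}^n.
\]

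Inserting this into the Nehari identity gives $c_0\|u_c\|_{H^{1/2}}^2\le\int u_c^p\,dx$, and together with the upper bound from the first step this already forces $\sup_{c>1}\|u_c\|_{H^{1/2}(\mathbb{R}^n)}<\infty$. For the remaining lower bound on $\int u_c^p$, the fractional Sobolev embedding in Lemma \ref{fractional-embedding} yields $\int u_c^p\le C_1\|u_c\|_{H^{1/2}}^p$; combined with $c_0\|u_c\|_{H^{1/2}}^2\le\int u_c^p$ and $u_c\not\equiv 0$, this gives $\|u_c\|_{H^{1/2}}^{p-2}\ge c_0/C_1$, and feeding this back through the Nehari lower bound produces $\int u_c^p\ge c_0(c_0/C_1)^{2/(p-2)}$, as required.

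The only nonroutine point is the monotonicity of $g(c)$; it is the quantitative manifestation of the nonrelativistic limit in \eqref{eq-formal} (the symbol increases in $c$ up to its limit $|\xi|^2/(2m)$), and without this uniform lower bound one cannot decouple the Nehari information from the $c$-dependent Fourier weight. Once this is in hand, everything else is a short Sobolev bootstrap.
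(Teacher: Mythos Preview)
Your proof is correct and follows essentially the same route as the paper: upper bound from Lemma~\ref{lem-iuc} and the Nehari identity, then a $c$-uniform lower bound on the symbol $\sqrt{c^2|\xi|^2+m^2c^4}-mc^2+\mu$ yielding control of $\|u_c\|_{H^{1/2}}$, and finally the fractional Sobolev inequality to close the lower bound on $\int u_c^p$. The only cosmetic difference is in how the key symbol inequality is obtained: the paper rationalizes to write $\sqrt{c^2|\xi|^2+m^2c^4}-mc^2=\dfrac{|\xi|^2}{\sqrt{|\xi|^2/c^2+m^2}+m}$ and reads off monotonicity in $c$ from the denominator, whereas you differentiate $g(c)$ directly; both yield the same inequality $g(c)\ge g(1)$.
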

\begin{proof}
Using Lemma \ref{lem-iuc} we have
\begin{equation}
\begin{split}
\sup_{c>1} \int_{\mathbb{R}^n} |u_c|^p dx& =\sup_{c>1} \int_{\mathbb{R}^n} c \left( \sqrt{ |\xi|^2 + m^2 c^2}- mc\right) |\hat{u}_c (\xi)|^2 + \mu |\hat{u}_{c}(\xi)|^2 d \xi
\\
&=\sup_{c>1}\left( \frac{1}{2}-\frac{1}{p}\right)^{-1} I_{e,c} (u_c)< \infty.
\end{split}
\end{equation}
Hence we have the uniform upper bound of $\int_{\mathbb{R}^n} |u_c|^{p} dx$ for $c>1$. Next, we use some trivial estimates and the Sobolev embedding $H^{1/2} (\mathbb{R}^n) \rightarrow L^{p}(\mathbb{R}^n)$ for $p < \frac{2n}{n-1}$ to get 
\begin{equation}\label{eq-up-1}
\begin{split}
\int_{\mathbb{R}^n} u_c^{p}(x) dx & = \int_{\mathbb{R}^n} c \left( \sqrt{\xi^2 +m^2 c^2}-m c\right) |\hat{u}_c (\xi)|^2 + \mu |\hat{u}_c (\xi)|^2 d\xi
\\
 &= \int_{\mathbb{R}^n} \left( \frac{|{\xi}|^2}{\sqrt{|\frac{\xi}{c}|^2 +m} +m} +\mu\right) |\hat{u}_c (\xi)|^2 d\xi
\\
& \geq \int_{\mathbb{R}^n} \left( \frac{|\xi|^2}{\sqrt{|\xi|^2 +m} +m} + \mu\right) |\hat{u}_c (\xi)|^2 d\xi
\\
&\geq C_{\mu,m} \int_{\mathbb{R}^n} (|\xi|+1) |\hat{u}_c (\xi)|^2 dx \geq C \left( \int_{\mathbb{R}^n} u_c^{p}(x) dx \right)^{\frac{2}{p}}.
\end{split}
\end{equation}
This yields that $\int_{\mathbb{R}^n} u_c^{p+1}(x) dx$ is lower bounded uniformly for $c>1$. Also, from \eqref{eq-up-1} we know that $\{u_c\}_{c>1}$ is bounded in $H^{1/2} (\mathbb{R}^n)$, and so $\{u_c\}_{c>1}$  is bounded in $L^{\frac{2n}{n-1}}(\mathbb{R}^n)$ by the Sobolev embedding. The lemma is proved.
\end{proof}
From the elliptic estimates (see for example, \cite{GT} and \cite{CN2}),
we see that each $u_c \in L^{q}(\mathbb{R}^n)$ for all $q \in [2,\infty]$.
Then the equation \eqref{main-eq} says that each solution $u_c$ is contained in $H^{1}(\mathbb{R}^n)$.
In the following theorem, we obtain a sharp uniform boundedness of $u_c$ in $H^1(\mathbb{R}^n)$ space with respect to $c>1$.
\begin{lem}\label{lem-uni-h1}
The ground state solutions $\{ u_c\}_{c>1}$ are uniformly bounded in $H^1 (\mathbb{R}^n)$.
More precisely, we have
\begin{equation}\label{eq-h1-1}
\limsup_{c \rightarrow \infty}\left( \|\nabla u_c\|_{L^2 (\mathbb{R}^n)}^2 + 2m \mu \| u_c\|_{L^2 (\mathbb{R}^n)}^2 \right) \leq
2m\limsup_{c \rightarrow \infty} \int_{\mathbb{R}^n} |u_c|^{p} dx.
\end{equation}
\end{lem}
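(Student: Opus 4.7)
The plan is to derive an exact algebraic identity expressing $\|\nabla u_c\|_{L^2}^2 + 2m\mu\|u_c\|_{L^2}^2$ as $2m\|u_c\|_{L^p}^p$ plus correction terms of size $O(c^{-2})$, then bootstrap this into a uniform $H^1$ bound via a Gagliardo--Nirenberg interpolation, and finally pass $c\to\infty$ inside the identity.

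For the identity, the decisive move is to test \eqref{main-eq} not against $u_c$ itself (which would only reproduce the usual Nehari identity) but against the cleverly chosen
\[
v_c := \bigl(\sqrt{-c^2\Delta+m^2c^4}+mc^2-\mu\bigr)u_c.
\]
Because this operator is the algebraic conjugate of $\sqrt{-c^2\Delta+m^2c^4}-mc^2+\mu$, the composition telescopes to the local second-order operator $-c^2\Delta + 2mc^2\mu - \mu^2$. Pairing both sides of \eqref{main-eq} with $v_c$ and re-using \eqref{main-eq} on the right to rewrite $\sqrt{-c^2\Delta+m^2c^4}u_c = (mc^2-\mu)u_c + u_c^{p-1}$, after dividing by $c^2$ I obtain
\begin{equation}\label{eq-uh1-star}
\|\nabla u_c\|_{L^2}^2 + 2m\mu\,\|u_c\|_{L^2}^2 = 2\Bigl(m-\tfrac{\mu}{c^2}\Bigr)\|u_c\|_{L^p}^p + \tfrac{\mu^2}{c^2}\|u_c\|_{L^2}^2 + \tfrac{1}{c^2}\|u_c\|_{L^{2(p-1)}}^{2(p-1)}.
\end{equation}

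The only term on the right of \eqref{eq-uh1-star} not immediately controlled by Lemma \ref{lem-upc} is $c^{-2}\|u_c\|_{L^{2(p-1)}}^{2(p-1)}$, and here the subcriticality $p < 2n/(n-1)$ is precisely what saves us: it forces $2(p-1) < 2n/(n-2)$ (with the convention $2n/(n-2) = \infty$ when $n = 2$), so I can interpolate $L^{2(p-1)}$ between the uniformly bounded $L^{2n/(n-1)}$ norm from the $H^{1/2}$ embedding of Lemma \ref{lem-upc} and the $L^{2n/(n-2)}$ norm controlled by $\|\nabla u_c\|_{L^2}$ via Sobolev embedding, yielding
\[
\|u_c\|_{L^{2(p-1)}}^{2(p-1)} \leq C\,\|\nabla u_c\|_{L^2}^{\gamma},\qquad \gamma := 2\bigl[(n-1)(p-1)-n\bigr] < 2,
\]
with $C$ independent of $c$. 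Substituting back into \eqref{eq-uh1-star} and invoking the uniform $L^2$ and $L^p$ bounds gives $\|\nabla u_c\|_{L^2}^2 \leq C_1 + C_2 c^{-2}\|\nabla u_c\|_{L^2}^{\gamma}$; since $\gamma < 2$, Young's inequality absorbs the last term and produces $\sup_{c>1}\|u_c\|_{H^1(\mathbb{R}^n)} < \infty$. Once this uniform $H^1$ bound is in hand, Sobolev upgrades it to a uniform bound on $\|u_c\|_{L^{2(p-1)}}$, so both $c^{-2}$-prefactored terms in \eqref{eq-uh1-star} vanish in the limit; taking $\limsup_{c\to\infty}$ in \eqref{eq-uh1-star} then yields exactly \eqref{eq-h1-1} (with equality, in fact).

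The main obstacle is the sharpness of the interpolation exponent: $\gamma\to 2$ as $p\uparrow 2n/(n-1)$, so the Young absorption succeeds only barely throughout the subcritical range and would collapse at the Sobolev-critical threshold. It is precisely the assumption $p < 2n/(n-1)$ of Theorem \ref{existence} that keeps $\gamma$ strictly below $2$ and thereby makes the entire scheme function.
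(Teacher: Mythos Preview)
Your proof is correct and follows the same route as the paper: both derive the identity \eqref{eq-uh1-star} by effectively squaring the equation (your pairing with $(\sqrt{-c^2\Delta+m^2c^4}+mc^2-\mu)u_c$ is algebraically the same as the paper's pairing of $\sqrt{-c^2\Delta+m^2c^4}\,u_c$ with itself and then using \eqref{main-eq} on each factor), and both then control $\|u_c\|_{L^{2(p-1)}}^{2(p-1)}$ by interpolation against the uniform $H^{1/2}$ bound of Lemma~\ref{lem-upc} to extract a sub-quadratic power of $\|\nabla u_c\|_{L^2}$ and bootstrap. Your direct interpolation of $L^{2(p-1)}$ between $L^{2n/(n-1)}$ and $L^{2n/(n-2)}$ is in fact a little cleaner than the paper's detour through Young's inequality $x^{2(p-1)}\le x^2 + C\,x^{2(n+1)/(n-1)}$; the only caveat is that when $2(p-1)<2n/(n-1)$ your formula gives $\gamma\le 0$, but in that range the $L^{2(p-1)}$ norm is already uniformly bounded by Lemma~\ref{lem-upc}, so the bootstrap is trivial there.
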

\begin{proof}
Using \eqref{main-eq} we have
\begin{equation*}
\begin{split}
&c^2 \| \nabla u_c\|_{L^2 (\mathbb{R}^n)}^2 + m^2 c^4 \|u_c\|_{L^2 (\mathbb{R}^n)}^2
\\
&\qquad = \left\langle \sqrt{-c^2 \Delta +m^2 c^4} u_{c}, \quad \sqrt{-c^2 \Delta +m^2 c^4} u_{c} \right\rangle
\\
&\qquad = \left\langle mc^2 u_c - \mu u_{c} + u_c^{p-1},\quad mc^2 u_c - \mu u_{c} + u_c^{p-1}\right\rangle
\\
&\qquad = m^2 c^4 \| u_c\|_{L^2 (\mathbb{R}^n)}^2 - 2m c^2 \mu \|u_c\|_{L^2 (\mathbb{R}^n)}^2 + \mu^2 \| u_c\|_{L^2 (\mathbb{R}^n)}^2 + 2 (mc^2 - \mu) \int_{\mathbb{R}^n} u_c^{p} dx + \int_{\mathbb{R}^n} u_c^{2(p-1)} dx.
\end{split}
\end{equation*}
This leads to
\begin{equation}\label{eq-h1}
\begin{split}
&c^2 \| \nabla u_c\|_{L^2 (\mathbb{R}^n)}^2 + 2m c^2 \mu \|u_c\|_{L^2 (\mathbb{R}^n)}^2
\\
&\qquad\quad= \mu^2 \|u_c\|_{L^2 (\mathbb{R}^n)}^2 +  2(mc^2 -\mu) \int_{\mathbb{R}^n} u_c^{p} dx + \int_{\mathbb{R}^n}u_c^{2(p-1)} dx
\\
&\qquad\quad \leq (\mu^2 +1) \|u_c\|_{L^2 (\mathbb{R}^n)}^2 +  2(mc^2 -\mu) \int_{\mathbb{R}^n} u_c^{p} dx +  C \int_{\mathbb{R}^n}u_c^{\frac{2(n+1)}{n-1}} dx,
\end{split}
\end{equation}
where we applied Young's inequality $x^{2(p-1)}\leq x^2 + C x^{\frac{2(n+1)}{n-1}}$ for some $C = C(n) >0$. Next, we shall apply the interpolation inequality
\begin{equation}
\| u_c\|_{L^{\frac{2(n+1)}{n-1}}(\mathbb{R}^n)} \leq C  \|u_c\|_{L^{\frac{2n}{n-1}}(\mathbb{R}^n)}^{a} \|u_c\|_{L^{\frac{2n}{n-2}}(\mathbb{R}^n)}^{(1-a)},
\end{equation}
with a constant $C= C(n)>0$ and a value $a \in (0,1)$ such that
\begin{equation}\label{eq-h2}
\frac{a(n-1)}{2n} + \frac{(1-a) (n-2)}{2n} = \frac{(n-1)}{2(n+1)}.
\end{equation}
Applying this inequality to \eqref{eq-h1} we get
\begin{equation}
\begin{split}
&c^2 \| \nabla u_c\|_{L^2 (\mathbb{R}^n)}^2 + 2m c^2 \mu \|u_c\|_{L^2 (\mathbb{R}^n)}^2
\\
&\qquad\leq 2(mc^2 -\mu) \int_{\mathbb{R}^n} u_c^{p} dx   + \int_{\mathbb{R}^n} u_c^2 dx + \int_{\mathbb{R}^n} u_c^{\frac{2(n+1)}{n-1}} dx
\\
&\qquad \leq 2(mc^2 -\mu) \int_{\mathbb{R}^n} u_c^{p} dx   + \int_{\mathbb{R}^n} u_c^2 dx + C_n  \|u_c\|_{L^{\frac{2n}{n-1}}(\mathbb{R}^n)}^{2a(p-1)} \|u_c\|_{L^{\frac{2n}{n-2}}(\mathbb{R}^n)}^{2(1-a)(p-1)}
\\
&\qquad \leq 2(mc^2 -\mu) \int_{\mathbb{R}^n} u_c^{p} dx   + \int_{\mathbb{R}^n} u_c^2 dx + S_n  \|u_c\|_{L^{\frac{2n}{n-1}}(\mathbb{R}^n)}^{2a(p-1)} \|\nabla u_c\|_{L^2(\mathbb{R}^n)}^{2(1-a)(p-1)}.
\end{split}
\end{equation}
By Lemma \ref{lem-upc} there is a constant $A>1$ independent of $n$ such that
\begin{equation}
\max\left\{ \int_{\mathbb{R}^n} u_c^{p+1} dx,~ \int_{\mathbb{R}^n} u_c^2 dx, ~S_n \|u_c\|_{L^{\frac{2n}{n-1}}(\mathbb{R}^n)}^{2a(p-1)}\right\}\leq A.
\end{equation}
Now we have
\begin{equation}\label{eq-h3}
\|\nabla u_c\|_{L^2 (\mathbb{R}^n)}^2 + 2m \mu \| u_c\|_{L^2 (\mathbb{R}^n)}^2 \leq \left(2m-2\frac{\mu}{c^2}\right) \int_{\mathbb{R}^n} u_c^{p} dx  + \frac{A}{c^2} + \frac{A}{c^2} \| \nabla u_c\|_{L^2 (\mathbb{R}^n)}^{2(1-a)(p-1)}.
\end{equation}
The relation \eqref{eq-h2} leads that $a = \frac{2}{n+1}$ and so
\begin{equation}
2 (1-a)\cdot (p-1) < 2\, \frac{(n-1)}{n+1} \cdot \frac{(n+1)}{n-1} =2.
\end{equation}
Thus we can deduce from the above inequality \eqref{eq-h3} that
\begin{equation}
\sup_{c > 0}\|\nabla u_c\|_{L^2 (\mathbb{R}^n)} < \infty.
\end{equation}
Using this fact and taking the limit $c \rightarrow \infty$ in \eqref{eq-h3} we get
\begin{equation}
\limsup_{c \rightarrow \infty}\left( \|\nabla u_c\|_{L^2 (\mathbb{R}^n)}^2 + 2m \mu \| u_c\|_{L^2 (\mathbb{R}^n)}^2 \right)
\leq 2m\limsup_{c \rightarrow \infty} \int_{\mathbb{R}^n} u_c^{p} dx.
\end{equation}
The lemma is proved.
\end{proof}

\section{Nonrelativistic limit}
In this section we finish our paper with showing the nonrelativistic limit. We recall the result of Kwong \cite{Kw} that there exists a unique positive radial solution $u_{\infty} \in H^1 (\mathbb{R}^n)$ of the problem
\begin{equation}\label{eq-nr}
-\frac{1}{2m} \Delta U + \mu U = U^{p-1}.
\end{equation}
First we shall obtain a convergence result in a weak sense under a weak condition on the sequence of solutions.
\begin{lem}\label{lem-lvw} 
Let $\{ v_c\}_{c >1} \subset H^{1/2}(\mathbb{R}^n)$ be an one parameter family of weak solutions of
\begin{equation}\label{eq-uc}
c \left( \sqrt{-\Delta + m^2 c^2} - mc \right) v_c + \mu v_c = |v_c|^{p-2} v_c.
\end{equation}
Suppose that $\{ v_c\}$ is bounded in both of $L^{2}(\mathbb{R}^n)$ and  $L^{p}(\mathbb{R}^n)$ uniformly for $c$.
Let $v$ be a weak limit of $v_c$ in $L^{2}(\mathbb{R}^n)$ and $L^p(\mathbb{R}^n)$, up to a subsequence.
Then $v$ is a very weak solution of
\begin{equation}
-\frac{1}{2m}\Delta v + \mu v = |v|^{p-2} v
\end{equation}
in the sense that
\begin{equation}
\int_{\mathbb{R}^n} -\frac{1}{2m} v \Delta \phi (x) + \mu v \phi (x) dx = \int_{\mathbb{R}^n} |v|^{p-2} v \phi (x) dx
\end{equation}
for any $\phi \in \mathcal{S}(\mathbb{R}^n)$, where $\mathcal{S}(\mathbb{R}^n)$ denotes the Schwartz class.
\end{lem}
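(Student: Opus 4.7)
The plan is to test the distributional formulation of \eqref{eq-uc} against an arbitrary $\phi \in \mathcal{S}(\mathbb{R}^n)$ and pass to the limit $c \to \infty$. By self-adjointness of the Fourier multiplier $c(\sqrt{-\Delta+m^2c^2}-mc)$ on $L^2(\mathbb{R}^n)$, this gives
\[
\int_{\mathbb{R}^n} v_c \bigl[ c(\sqrt{-\Delta+m^2c^2}-mc)\phi + \mu\phi \bigr]\,dx = \int_{\mathbb{R}^n} |v_c|^{p-2}v_c\,\phi\,dx,
\]
and the task is to show that each side converges to the corresponding term in the very weak formulation of the limit equation. I would handle the linear left-hand side and the nonlinear right-hand side separately.

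The linear side is straightforward. In Fourier variables the multiplier applied to $\hat\phi$ reads
\[
c(\sqrt{|\xi|^2+m^2c^2}-mc)\hat\phi(\xi) = \frac{|\xi|^2}{\sqrt{|\xi|^2/c^2+m^2}+m}\,\hat\phi(\xi),
\]
which converges pointwise to $|\xi|^2\hat\phi(\xi)/(2m)$ and is dominated by that limit. Since $\phi\in\mathcal{S}$, dominated convergence together with Plancherel yields $c(\sqrt{-\Delta+m^2c^2}-mc)\phi \to -\frac{1}{2m}\Delta\phi$ strongly in $L^2(\mathbb{R}^n)$, and pairing with the weak $L^2$ convergence $v_c \rightharpoonup v$ transforms the left-hand side into $\int v\bigl(-\tfrac{1}{2m}\Delta\phi + \mu\phi\bigr)\,dx$.

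The nonlinear side is the main obstacle, since weak $L^p$ convergence alone does not imply $|v_c|^{p-2}v_c \rightharpoonup |v|^{p-2}v$. To overcome this, I would first upgrade the hypothesis by testing \eqref{eq-uc} with $v_c$ itself and using Plancherel. A direct splitting of the symbol gives a pointwise lower bound
\[
c(\sqrt{|\xi|^2+m^2c^2}-mc) \geq C\min(|\xi|,|\xi|^2) \qquad \text{for all } c \geq 1,
\]
which, combined with the assumed uniform $L^2$ and $L^p$ bounds on $v_c$, yields $\sup_{c>1}\|v_c\|_{H^{1/2}(\mathbb{R}^n)} < \infty$. Since $p < 2n/(n-1)$, Rellich compactness of $H^{1/2}\hookrightarrow L^p_{\mathrm{loc}}$ gives, along a further subsequence, $v_c \to v$ strongly in $L^p(B_R)$ for every $R>0$. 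Hence $\int_{B_R}|v_c|^{p-2}v_c\,\phi\,dx \to \int_{B_R}|v|^{p-2}v\,\phi\,dx$, while the tail is controlled uniformly in $c$ by H\"older's inequality,
\[
\Bigl|\int_{|x|>R} |v_c|^{p-2}v_c\,\phi\,dx\Bigr| \leq \|v_c\|_{L^p(\mathbb{R}^n)}^{p-1}\,\|\phi\|_{L^p(|x|>R)},
\]
which can be made arbitrarily small thanks to the rapid Schwartz decay of $\phi$. Letting $R\to\infty$ after $c\to\infty$ identifies the limit of the nonlinear side as $\int|v|^{p-2}v\,\phi\,dx$, and matching with the limit of the linear side gives the very weak formulation.
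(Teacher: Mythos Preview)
Your argument is correct and the linear side coincides with the paper's proof: both move the pseudo-differential operator onto $\phi$, rewrite the symbol as $|\xi|^2/(\sqrt{|\xi|^2/c^2+m^2}+m)$, and use dominated convergence together with Plancherel to get strong $L^2$ convergence to $-\tfrac{1}{2m}\Delta\phi$, which pairs with the weak $L^2$ convergence of $v_c$.

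The difference lies in the nonlinear term. The paper simply asserts that $|v_c|^{p-2}v_c \rightharpoonup |v|^{p-2}v$ weakly, without justification; as you correctly point out, this does not follow from weak $L^p$ convergence of $v_c$ alone. Your route---testing \eqref{eq-uc} with $v_c$ to extract a uniform $H^{1/2}$ bound, then using the compact embedding $H^{1/2}\hookrightarrow L^p_{\mathrm{loc}}$ for $p<2n/(n-1)$ and the Schwartz decay of $\phi$ to handle the tail---supplies exactly what is missing and works in the generality of the lemma as stated. In the paper's actual application the sequence is already known to be bounded in $H^{1/2}$ and radially symmetric (Lemma~\ref{lem-upc}), so the gap is harmless there, but your argument is the cleaner way to prove the lemma as written.
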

\begin{proof}
Since $v_c$ is a weak solution to \eqref{eq-uc},  for any $\phi \in \mathcal{S}(\mathbb{R}^n)$ we have
\begin{equation}\label{eq-uc-1}
\begin{split}
&\int_{\mathbb{R}^n} |v_c|^{p-2} v_c (x) \phi (x) dx
\\
&= \int_{\mathbb{R}^n} c (-\Delta + m^2 c^2)^{1/4} v_c (x) (-\Delta + m^2 c^2)^{1/4} \phi (x)  - mc^2  v_c  \phi (x) + \mu v_c  (x) \phi (x) dx
\\
& = \int_{\mathbb{R}^n} c v_c (x) (-\Delta + m^2 c^2)^{1/2} \phi (x)  - mc^2  v_c  \phi (x) + \mu v_c  (x) \phi (x) dx.
\end{split}
\end{equation}
First, since $\mathcal{S}(\mathbb{R}^n) \subset (L^q (\mathbb{R}^n))^{*}$ for any $q \in [1,\infty]$, we have
\begin{equation}\label{eq-uc-2}
\lim_{c \rightarrow \infty} \int_{\mathbb{R}^n} \mu v_c (x) \phi(x) dx =  \int_{\mathbb{R}^n} \mu v (x) \phi (x) dx
\end{equation}
and
\begin{equation}\label{eq-uc-3}
\lim_{c \rightarrow \infty} \int_{\mathbb{R}^n} |v_c|^{p-2} v_c (x) \phi (x) dx = \int_{\mathbb{R}^n} |v|^{p-2} v (x)\phi (x) dx
\end{equation}
from the weak convergence of $v_c$ and $|v_c|^{p-2}v_c$ to $v$ and $|v|^{p-2}v$
respectively.
Next we claim that for each $\phi \in \mathcal{S}(\mathbb{R}^n)$,
\begin{equation}\label{eq-uc-4}
c \left( \sqrt{-\Delta +m^2 c^2}-mc \right) \phi \rightarrow -\frac{\Delta}{2m}\phi
\end{equation}
strongly in $L^{2}(\mathbb{R}^n)$. To show this, we use the Plancherel theorem to have
\begin{equation}
\begin{split}
&\left\| c \left( \sqrt{-\Delta + m^2 c^2}-mc\right) \phi + \frac{\Delta}{2m} \phi \right\|_{L^2 (\mathbb{R}^n)}^2
\\
&=\int_{\mathbb{R}^n} \left| c \left(\sqrt{-\Delta +m^2 c^2} -mc\right) \phi + \frac{\Delta}{2m} \phi \right|^2 dx
\\
&= \int_{\mathbb{R}^n} \left|c\left(\sqrt{\xi^2 +m^2 c^2}-mc \right) - \frac{|\xi|^2}{2m}\right|^2 \widehat{\phi} (\xi)^2 d\xi
  = \int_{\mathbb{R}^n} \left| \frac{|\xi|^2}{\sqrt{\frac{|\xi|^2}{c^2} +m^2} + m} - \frac{|\xi|^2}{2m} \right|^2 \widehat{\phi} (\xi)^2 d\xi.
\end{split}
\end{equation}
We note that for any $c \geq 1$, it holds that
\begin{equation}
\left| \frac{|\xi|^2}{\sqrt{\frac{|\xi|^2}{c^2} +m^2} + m} - \frac{|\xi|^2}{2m} \right| \leq \left(\frac{|\xi|^2}{\sqrt{|\xi|^2 +m^2} + m} + \frac{|\xi|^2}{2m}\right),
\end{equation}
and
\begin{equation}
\int_{\mathbb{R}^n} \left(\frac{|\xi|^2}{\sqrt{|\xi|^2 +m^2} + m} + \frac{|\xi|^2}{2m}\right)^2 |\widehat{\phi}(\xi)|^2 d\xi  < \infty,
\end{equation}
since $\phi \in \mathcal{S}(\mathbb{R}^n)$. In addition, for each $\xi \in \mathbb{R}^n$ we trivially have
\begin{equation}
\lim_{c \rightarrow \infty} \left| \frac{|\xi|^2}{\sqrt{\frac{|\xi|^2}{c^2} +m^2} + m} - \frac{|\xi|^2}{2m} \right|^2 \widehat{\phi} (\xi)^2 =0.
\end{equation}
Thus, we may use the Lebesque dominated convergence theorem to conclude that
\begin{equation}
\lim_{c \rightarrow \infty} \int_{\mathbb{R}^n}\left| \frac{|\xi|^2}{\sqrt{\frac{|\xi|^2}{c^2} +m^2} + m} - \frac{|\xi|^2}{2m} \right|^2 \widehat{\phi} (\xi)^2 d\xi = 0.
\end{equation}
The claim is proved.

Combining \eqref{eq-uc-4} with the fact that $\{v_c\}_{c>1}$ is bounded in $L^{2}(\mathbb{R}^n)$ and $v_c \rightarrow v$ weakly in $L^{2}(\mathbb{R}^n)$, we deduce that
\begin{equation}\label{eq-uc-5}
\begin{split}
\lim_{c \rightarrow \infty} \int_{\mathbb{R}^n} v_c (x) c \left( \sqrt{-\Delta + m^2 c^2} -mc \right) \phi (x) dx &= \lim_{c \rightarrow \infty}\int_{\mathbb{R}^n} v_c \left( -\frac{1}{2m} \Delta \phi \right) (x) dx
\\
&= -\frac{1}{2m} \int_{\mathbb{R}^n} v \Delta \phi (x) dx.
\end{split}
\end{equation}
Now we see from \eqref{eq-uc-1}, \eqref{eq-uc-2}, \eqref{eq-uc-3} and \eqref{eq-uc-5} that
\begin{equation}
\int_{\mathbb{R}^n} -\frac{1}{2m} v \Delta \phi (x) +\mu v \phi(x) + |v|^{p-2} v (x) \phi (x) dx =0.
\end{equation}
Thus $v$ is a very weak solution of the problem
\begin{equation}
-\frac{1}{2m} \Delta v + \mu v + |v|^{p-2} v =0.
\end{equation}
The proof is complete.
\end{proof}
In the next lemma we shall prove a basic fact that a very weak solution is also a weak solution under a suitable assumption.
\begin{lem}\label{lem-lvw2}
Assume $v \in L^{p}(\mathbb{R}^n)$ is a very weak solution of
\begin{equation}\label{eq-leq}
-\frac{1}{2m} \Delta v + \mu v = |v|^{p-2}v.
\end{equation}
Then $v$ is contained in $H^1 (\mathbb{R}^n)$ and is a weak solution to problem \eqref{eq-leq}.
\end{lem}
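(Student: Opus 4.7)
The plan is to identify $v$ with the unique $H^1$ weak solution of the linear equation having right-hand side $f := |v|^{p-2}v$ and then match the two via a uniqueness principle for very weak solutions in $L^p$. I would proceed in three steps.

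First I would observe that $f = |v|^{p-2}v$ belongs to $L^{p/(p-1)}(\mathbb{R}^n)$ since $v \in L^p$. For $p \in (2, 2n/(n-1))$ the Sobolev embedding $H^1(\mathbb{R}^n) \hookrightarrow L^p(\mathbb{R}^n)$ holds (using $p \leq 2n/(n-2)$ when $n \geq 3$, and any $p < \infty$ when $n=2$), so by duality $L^{p/(p-1)} \hookrightarrow H^{-1}(\mathbb{R}^n)$. The bilinear form $B(u,\phi) := \int_{\mathbb{R}^n} \frac{1}{2m}\nabla u\cdot\nabla \phi + \mu u\phi\,dx$ is continuous and coercive on $H^1(\mathbb{R}^n)$ thanks to $\mu>0$, so the Lax--Milgram theorem produces a unique $w \in H^1(\mathbb{R}^n)$ with $B(w,\phi)=\int f\phi\,dx$ for every $\phi \in H^1$. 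In particular $w \in L^p$, and integrating by parts against an arbitrary $\phi \in \mathcal{S}(\mathbb{R}^n)$ shows that $w$ is also a very weak solution of \eqref{eq-leq}.

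Next I would show that very weak $L^p$ solutions of $L\phi := -\frac{1}{2m}\Delta \phi + \mu \phi = 0$ vanish. The function $\tilde v := v - w \in L^p$ satisfies $\int \tilde v\, L\phi\,dx = 0$ for all $\phi \in \mathcal{S}$. The crucial observation is that $L : \mathcal{S}(\mathbb{R}^n) \to \mathcal{S}(\mathbb{R}^n)$ is surjective: given $\psi \in \mathcal{S}$, setting $\widehat{\phi}(\xi) := \widehat{\psi}(\xi)\,(|\xi|^2/(2m) + \mu)^{-1}$ yields a Schwartz function $\phi$ because the multiplier $(|\xi|^2/(2m)+\mu)^{-1}$ is $C^\infty$ (no singularity, thanks to $\mu>0$) with all derivatives $O((1+|\xi|)^{-2-|\alpha|})$, so the Schwartz class is preserved under multiplication by it, and $L\phi = \psi$. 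Consequently $\int \tilde v\,\psi\,dx = 0$ for every $\psi \in \mathcal{S}$, which forces $\tilde v \equiv 0$ and hence $v = w \in H^1(\mathbb{R}^n)$. A final density argument in the Lax--Milgram identity, now legal for $\phi \in H^1$, gives the weak formulation $\int \frac{1}{2m}\nabla v\cdot\nabla\phi + \mu v \phi\,dx = \int |v|^{p-2}v\,\phi\,dx$.

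The main point to check carefully is the surjectivity of $L$ on $\mathcal{S}$, which reduces to the symbol estimates for $(|\xi|^2/(2m)+\mu)^{-1}$; everything else is standard Lax--Milgram and Sobolev embedding. An alternative self-contained route would be to construct $w$ directly via the Bessel-potential convolution $w = G * f$ with $\widehat G = (|\xi|^2/(2m)+\mu)^{-1}$ and invoke $L^{p/(p-1)}$ Calder\'on--Zygmund theory to conclude $w \in W^{2,p/(p-1)} \subset H^1$, but the Lax--Milgram path is shorter and uses only tools already implicit in the paper.
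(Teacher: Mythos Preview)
Your proof is correct and follows essentially the same route as the paper's: construct the $H^1$ weak solution $w$ of the linear problem with right-hand side $|v|^{p-2}v$ (the paper invokes existence directly, you spell out Lax--Milgram), then show $v-w$ is a very weak solution of the homogeneous equation and use that $(-\tfrac{1}{2m}\Delta+\mu)^{-1}$ maps $\mathcal{S}$ (or $C_0^\infty$) into $\mathcal{S}$ to conclude $v=w$. Your justification of the surjectivity via the smooth, nonvanishing symbol $(|\xi|^2/(2m)+\mu)^{-1}$ is exactly what the paper leaves implicit.
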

\begin{proof}
Let  $w \in H^{1}(\mathbb{R}^n)$ be a unique weak solution of
\begin{equation}
-\frac{1}{2m} \Delta w + \mu w = |v|^{p-2}v.
\end{equation}
Observe $w$ is well defined as a function in $H^{1}(\mathbb{R}^n)$ because $|v|^{p-2}v \in L^{\frac{p}{p-1}}(\mathbb{R}^n)$ and $\frac{p}{p-1} > \frac{2n}{n+1}$ by the assumption that $p < \frac{2n}{n-1}$. 
Then  $w$ also satisfies
\begin{equation}
\int_{\mathbb{R}^n}  w (x)\left(-\frac{1}{2m}\Delta +\mu \right)\phi (x) dx = \int_{\mathbb{R}^n} |v|^{p-2}v(x) \phi (x) dx
\end{equation}
for every $\phi \in \mathcal{S}(\mathbb{R}^n)$ so that we have
\begin{equation}\label{eq-vw-2}
\int_{\mathbb{R}^n} (v-w) \left(-\frac{1}{2m}\Delta +\mu \right)\phi (x) dx = \int_{\mathbb{R}^n} |v|^{p-2}v(x)\phi (x) dx -\int_{\mathbb{R}^n} |v|^{p-2}v (x)\phi (x) dx=0.
\end{equation}
On the other hand, we claim that if $h \in L^{p} (\mathbb{R}^n)$ is a very weak solution of $(-\frac{1}{2m}\Delta +\mu) h =0$, i.e.,
\begin{equation}\label{eq-vw-1}
\int_{\mathbb{R}^n} h(x) \left(-\frac{1}{2m}\Delta +\mu \right) \phi (x) dx = 0,
\end{equation}
for any $\phi \in \mathcal{S} (\mathbb{R}^n)$, then $h =0$ holds. To show this, 
for any given function $\psi \in C_0^\infty(\mathbb{R}^n)$, we set $\phi (x) =(-\frac{1}{2m}\Delta+\mu)^{-1} \psi (x) \in \mathcal{S} (\mathbb{R}^n )$ so that $(-\frac{1}{2m}\Delta +\mu)\phi = \psi$. Then
\begin{equation}
\int_{\mathbb{R}^n} h(x) \psi (x) dx =0\quad \textrm{for all}~ \phi \in C_0^\infty(\mathbb{R}^n),
\end{equation}
which leads to that $h (x)=0$ holds almost everywhere.  From this uniqueness property, we see that $v =w \in H^1 (\mathbb{R}^n)$ by \eqref{eq-vw-2}, and hence $v$ is a weak solution of \eqref{eq-leq}.
\end{proof}
Now we shall prove the $H^1 (\mathbb{R}^n)$ convergence result of $\{u_c\}$ by combining the above weak limit property and the sharp $H^1 (\mathbb{R}^n)$ bound of Lemma \ref{lem-uni-h1}.

\begin{lem} The ground state solution $u_c$ converges to $u_{\infty}$ strongly in $H^1 (\mathbb{R}^n)$,
up to a subsequence, as $c \rightarrow \infty$.
\end{lem}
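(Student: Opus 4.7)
The strategy is to extract a weak $H^1$ limit from $\{u_c\}$, identify this limit with $u_\infty$ by combining Lemmas~\ref{lem-lvw}--\ref{lem-lvw2} with Kwong's uniqueness theorem, and then upgrade the weak convergence to strong $H^1$ convergence by matching the sharp asymptotic bound of Lemma~\ref{lem-uni-h1} against the test identity obtained from the limit equation for $u_\infty$. As a first step, Lemma~\ref{lem-uni-h1} furnishes a uniform $H^1(\mathbb{R}^n)$ bound on $\{u_c\}$, so after passing to a subsequence one has $u_c \rightharpoonup v$ weakly in $H^1(\mathbb{R}^n)$. Because each $u_c$ is radial about the origin and $H^1_r(\mathbb{R}^n)$ embeds compactly into $L^q(\mathbb{R}^n)$ for every subcritical $q$ (in particular for $q=p$, since $p<2n/(n-1)<2^{*}$), this convergence can be upgraded to strong convergence in $L^p(\mathbb{R}^n)$, and the weak limit $v$ is itself radial and nonnegative.

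Next I would apply Lemma~\ref{lem-lvw} to the family $\{u_c\}$; the uniform $L^2$ and $L^p$ bounds required there are supplied by Lemma~\ref{lem-upc}. This yields that $v$ is a very weak solution of $-\tfrac{1}{2m}\Delta v + \mu v = |v|^{p-2}v$, and Lemma~\ref{lem-lvw2} then promotes $v$ to a genuine weak $H^1$ solution. Nontriviality is inherited from the uniform lower bound $\int u_c^{p}\,dx \geq 1/C$ of Lemma~\ref{lem-upc} together with the strong $L^p$ convergence, so $\int v^p\,dx \geq 1/C>0$. Elliptic regularity and the strong maximum principle upgrade $v$ to a positive radial classical solution, and the uniqueness theorem of Kwong then identifies $v$ with $u_\infty$. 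Since every subsequence admits a further subsequence converging to the same limit, the entire family $u_c$ converges weakly in $H^1(\mathbb{R}^n)$ to $u_\infty$.

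The strong convergence, which is the main obstacle, rests on a sharp matching of energies. Testing the limit equation against $u_\infty$ gives the identity
\[
\|\nabla u_\infty\|_{L^2}^2 + 2m\mu\|u_\infty\|_{L^2}^2 \;=\; 2m\int_{\mathbb{R}^n} u_\infty^p\,dx.
\]
Combining Lemma~\ref{lem-uni-h1} with the strong $L^p$ convergence $u_c \to u_\infty$ yields
\[
\limsup_{c\to\infty}\bigl(\|\nabla u_c\|_{L^2}^2 + 2m\mu\|u_c\|_{L^2}^2\bigr) \;\leq\; 2m\int_{\mathbb{R}^n} u_\infty^p\,dx \;=\; \|\nabla u_\infty\|_{L^2}^2 + 2m\mu\|u_\infty\|_{L^2}^2.
\]
Weak $H^1$ lower semicontinuity bounds each of $\|\nabla u_c\|_{L^2}^2$ and $\|u_c\|_{L^2}^2$ from below by the corresponding norm of $u_\infty$; a short elementary argument (no mass of one quantity can cluster above its liminf without forcing the other below its liminf) then forces both to converge individually. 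Norm convergence together with weak convergence in the Hilbert space $H^1(\mathbb{R}^n)$ then automatically gives strong $H^1$ convergence. The delicate ingredient is that closing this argument requires both the sharp constant $2m$ in Lemma~\ref{lem-uni-h1} and the absence of $L^p$ mass loss at infinity (supplied by the compact radial embedding and the uniform lower $L^p$ bound); a weaker version of either ingredient would leave an irreducible gap between the limsup and the liminf and prevent the strong conclusion.
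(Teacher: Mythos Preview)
Your proposal is correct and follows essentially the same route as the paper: both use the uniform $H^1$ bound of Lemma~\ref{lem-uni-h1} to extract a weak limit, identify it with $u_\infty$ via Lemmas~\ref{lem-lvw}--\ref{lem-lvw2}, the compact radial embedding, the lower $L^p$ bound of Lemma~\ref{lem-upc}, and Kwong's uniqueness, and then close the strong $H^1$ convergence by matching the sharp inequality \eqref{eq-h1-1} against the limit-equation identity $\|\nabla u_\infty\|_{L^2}^2 + 2m\mu\|u_\infty\|_{L^2}^2 = 2m\int u_\infty^p$. The only cosmetic difference is in the final step: the paper expands $\|\nabla(u_c-u_\infty)\|_{L^2}^2 + 2m\mu\|u_c-u_\infty\|_{L^2}^2$ directly and bounds its $\limsup$ by zero using weak convergence in the cross term, whereas you argue via ``$\limsup$ of the sum $\leq$ sum of the $\liminf$s'' to force norm convergence and then invoke the Hilbert-space fact that weak plus norm convergence gives strong convergence---these are two phrasings of the same computation.
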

\begin{proof}
We recall that $\{u_c\}_{c>1}$ is bounded in $L^2 (\mathbb{R}^n) \cap L^{p}(\mathbb{R}^n)$ by Lemma \ref{lem-upc}. Thus, applying Lemma \ref{lem-lvw}, the solution $u_c$ converges to a function $v \in L^{p}(\mathbb{R}^n)$ weakly in $L^p (\mathbb{R}^n)$ and $v$ is a solution to
\begin{equation}
-\frac{1}{2m}\Delta v + \mu v = v^{p-1}.
\end{equation}
By Lemma \ref{lem-lvw2} the function $v$ is a weak solution.
From the compact Sobolev embedding (Lemma \ref{fractional-embedding}) and
lower $L^p$ boundedness (Lemma \ref{lem-upc}), we can see $v$ is nontrivial, nonnegative, radially symmetric, and thus $v= u_{\infty}$ by the uniqueness result of Kwong \cite{Kw}. We note from \eqref{eq-nr} that $u_{\infty}$ satisfies
\begin{equation}
\int_{\mathbb{R}^n} \nabla u_{\infty} \cdot \nabla u_{\infty} + 2m \mu u_{\infty}^2 dx= 2m \int_{\mathbb{R}^n} u_{\infty}^{p} dx.
\end{equation}
And, since $u_c$ converges to $u_{\infty}$ weakly in $H^1 (\mathbb{R}^n)$, we have
\begin{equation}
\begin{split}
&\lim_{c \rightarrow \infty} \int_{\mathbb{R}^n} \nabla u_c \nabla u_{\infty} + 2m \mu u_c u_{\infty} dx
\\
&\qquad\qquad = \int_{\mathbb{R}^n} \nabla u_{\infty} \cdot \nabla u_{\infty} + 2m \mu u_{\infty}^2 dx = 2m \int_{\mathbb{R}^n} u_{\infty}^{p} dx.
\end{split}
\end{equation}
Combining these two equalities with \eqref{eq-h1-1}, we get
\begin{equation}
\begin{split}
& \limsup_{c \rightarrow \infty}\left( \| \nabla (u_c - u_{\infty})\|_{L^2 (\mathbb{R}^n)}^2 + 2m \mu \|u_c - u_{\infty}\|_{L^2 (\mathbb{R}^n)}^2\right)
\\
&= \limsup_{c \rightarrow \infty} \biggl( \| \nabla u_c \|_{L^2 (\mathbb{R}^n)}^2 + 2m \mu \|u_c\|_{L^2 (\mathbb{R}^n)}^2 \biggr.
\\
&\qquad\qquad \biggl. -2 \left[ \int_{\mathbb{R}^n} \nabla u_c \nabla u_{\infty} + 2m \mu u_c u_{\infty} dx \right] + \| \nabla u_{\infty}\|_{L^2 (\mathbb{R}^n)}^2 + 2m \mu \| u_{\infty}\|_{L^2 (\mathbb{R}^n)}^2 \biggr)
\\
&\leq \limsup_{c \rightarrow \infty} \biggl( \| \nabla u_c \|_{L^2 (\mathbb{R}^n)}^2 + 2m \mu \|u_c\|_{L^2 (\mathbb{R}^n)}^2 \biggr)
\\
&\qquad+ \limsup_{c \rightarrow \infty} \left( -2 \left[ \int_{\mathbb{R}^n} \nabla u_c \nabla u_{\infty} + 2m \mu u_c u_{\infty} dx \right] + \| \nabla u_{\infty}\|_{L^2 (\mathbb{R}^n)}^2 + 2m \mu \| u_{\infty}\|_{L^2 (\mathbb{R}^n)}^2\right)
\\
&\leq  2m \int_{\mathbb{R}^n} u_{\infty}^{p} dx  -4m \int_{\mathbb{R}^n} u_{\infty}^{p} dx  +2m \int_{\mathbb{R}^n} u_{\infty}^{p} dx  = 0.
\end{split}
\end{equation}
 The lemma is proved.
\end{proof}

\medskip
\noindent \textbf{Acknowledgments}
The second author was supported by Basic Science Research Program through the National Research Foundation of Korea(NRF) funded by the Ministry of Education (NRF-2014R1A1A2054805).
Both authors were surpported by the POSCO TJ Park Science Fellowship.

\medskip

\end{document}